\newtheorem{theorem}{Theorem}[section]
\newtheorem{lemma}[theorem]{Lemma}
\newtheorem{conjecture}[theorem]{Conjecture}
\newtheorem{corollary}[theorem]{Corollary}
\theoremstyle{definition}
\newtheorem{definition}[theorem]{Definition}
\theoremstyle{remark}
\newtheorem{remark}[theorem]{Remark}
\newtheorem{example}[theorem]{Example}
\numberwithin{equation}{section}
\DeclareMathOperator{\vol}{vol}
\DeclareMathOperator{\supp}{supp}
\newcommand{\const}{{\rm const}}
\newcommand{\inte}{\mathop{\rm int}}
\renewcommand{\Re}{\mathop{\rm Re}\nolimits}
\renewcommand{\Im}{\mathop{\rm Im}\nolimits}
\renewcommand{\epsilon}{\varepsilon}
\renewcommand{\phi}{\varphi}
\renewcommand{\kappa}{\varkappa}
\newcounter{fig}
\newcommand{\f}{\refstepcounter{fig} Fig. \arabic{fig}. }
\begin{document}

\title{Bang's problem and symplectic invariants}

\author{Arseniy~Akopyan{$^\spadesuit$}}

\address{{$^\spadesuit$}Institute of Science and Technology Austria (IST Austria), Am Campus 1, 3400 Klosterneuburg, Austria}

\email{akopjan@gmail.com}

\author{Roman~Karasev{$^\clubsuit$}}

\address{{$^\clubsuit$}Dept. of Mathematics, Moscow Institute of Physics and Technology, Institutskiy per. 9, Dolgoprudny, Russia 141700}

\address{{$^\spadesuit$}{$^\clubsuit$}Institute for Information Transmission Problems RAS, Bolshoy Karetny per. 19, Moscow, Russia 127994}

\email{r\_n\_karasev@mail.ru}
\urladdr{http://www.rkarasev.ru/en/}

\author{Fedor~Petrov{$^\diamondsuit$}}

\address{{$^\diamondsuit$}Saint-Petersburg Department of the Steklov Mathematical Institute, nab. Fontanki 27, Saint-Petersburg, Russia 191023}

\email{fedyapetrov@gmail.com}

\thanks{{$^\spadesuit$} Supported by People Programme (Marie Curie Actions) of the European Union’s Seventh Framework Programme (FP7/2007–2013) under REA Grant Agreement No. [291734]}

\thanks{{$^\clubsuit$} Supported by the Federal professorship program grant 1.456.2016/1.4 and the Russian Foundation for Basic Research grants 18-01-00036 and 19-01-00169}

\thanks{{$^\diamondsuit$} Supported by the Russian Foundation for Basic Research grant 17-01-00433}

\subjclass[2010]{52C17, 37J15, 37D50, 53D35}
\keywords{Bang's problem, billiards, symplectic capacity}

\begin{abstract}
We consider the Tarski--Bang problem about covering of convex bodies by planks. The results of this kind give a lower bound on the sum of widths of planks (regions between a pair of parallel hyperplanes) covering a given convex body.

Previously we have applied some notions of symplectic geometry to study convex bodies, and here we show that the symplectic techniques may be useful in this problem as well. We are able to handle some particular cases with the symplectic techniques, and show that the general cases would follow from a certain ``subadditivity conjecture'' in symplectic geometry, motivated by the results of K.~Ball. We also prove several related results by more elementary methods.
\end{abstract}

\maketitle

\section{Introduction}

\subsection{The Moese--Tarski--Bang problem}

We start from recalling the classical problem attributed to Alfred~Tarski and Th{\o}ger~Bang and the known results on this problem, in particular those of Keith~Ball, that give motivation to the whole discussion in this text.

The earliest version of this problem appeared when Tarski studied~\cite{tarski1931,tarski1932} certain degree of equivalence $\tau(x)$ of a unit square $Q$ and a rectangle $P$ of size $x\times \frac{1}{x}$, defined as the smallest number of parts one has to cut the rectangle into to assemble the square from the parts. To solve a particular case of this problem and show that $\tau(n)=n$ for natural numbers $n$, Henryk~Moese~\cite{moese1932} inscribed a disk $K$ into $Q$ and noticed that this disk cannot be covered by less than $n$ parts $P_i$ of $P$. The solution used the trick of projecting the sphere in $\mathbb R^3$ onto $K$ and counting the areas of the preimages of $P_i$ on the sphere. This gave the solution of what was called later ``the Bang problem'' for the round disk $K$ and the Euclidean norm.

Bang had~\cite{bang1951} a different (non-volumetric) solution of the more general problem: If a convex body $K\subset\mathbb R^n$ is covered by planks $P_1,\ldots, P_m$ (a plank is a set bounded by a pair of parallel hyperplanes) then the sum of Euclidean widths of the planks is at least the Euclidean width of $K$. Bang also conjectured~\cite{bang1951} that whenever a convex body $K$ is covered by planks $P_1, \ldots, P_m$, the sum of relative widths of the planks is at least $1$. Here the \emph{relative width} is the width of $P_i$ in the norm with the unit ball $K-K$ (the symmetrization of $K$), and this version would certainly imply the original result of Bang.

To date, the best result on Bang's conjecture belongs to Ball~\cite{ball1991}, who established it for all centrally symmetric convex bodies $K$. For non-symmetric bodies the problem remains open.

There is essentially one general approach to Bang's problem known so far, designed by Bang himself. For any plank $P_i$, we take a pair of points on its bounding hyperplanes at which the distance between the bounding hyperplanes (the width of the plank) is attained, call such two-point set $I_i$. The first easy step is to show that the Minkowski sum 
\[
X = I_1+\dots + I_m = \{p_1+\dots + p_m : p_1 \in I_1,\ldots, p_m\in I_m\}
\] 
can be translated to fit into any given convex body of minimal width 
\[
w(K) \ge \sum_{i=1}^m w(P_i).
\] 

The second step, the main lemma of Bang, asserts that at least one point of this Minkowski sum $X$ is not covered by the interiors of $P_i$. This only works in the Euclidean case, lifting the dimension allows to consider planks $P_i$ and sets $I_i$ centered at the origin, and then it is relatively easy to see that the point from $X$ with the largest distance from the origin is not covered by the interiors of $P_i$ (see how this simple idea works in the recent work \cite{polyajiang2017} on a similar problem). After a simple approximation argument to pass from the interiors of the planks to closed planks, this lemma immediately proves the Euclidean case of Bang's problem. This construction together with several other technical tools was also used in Ball's proof of the general symmetric case \cite{ball1991}. In~\cite{ball2001} Ball proved another version of this problem for complex vector spaces, which we also discuss in Section~\ref{section:cylinders}.

In contrast to the general case, the approach of Moese to the cases of dimension $2$ and $3$ is volumetric. The crucial observation is that for a plank $P_i$ the area of its intersection with the round two-sphere $S^2\subset\mathbb R^3$ is proportional to the plank width. It is also known that the volumetric approach fails in larger dimensions.

\subsection{Symplectic tools}

In this paper we are going to propose another ``quantitative'' approach to the Bang conjecture based on certain invariants of symplectic manifolds, known as symplectic capacities, introduced by Helmut Hofer and Eduard Zehnder (see their nice book~\cite{hofer1994}), with first nontrivial examples given previously by Mikhail Gromov~\cite{grom1985}. This approach has already proved to be useful in the intersection of symplectic and convex geometry in~\cite{aao2012,aaok2013,apbtz2013}, and allows either to solve a problem in convex geometry by symplectic methods or provides a good intuition to pose the ``right questions'' in convex geometry. 

This time we are going to do the opposite: Use the knowledge on the convex problem to pose the corresponding ``right problem'' in symplectic geometry. The central theme of our discussion if the following (rather imprecise at this point) conjecture about symplectic capacities: 

\begin{conjecture}
\label{conjecture:subadditivity}
If a convex body $K\subset \mathbb C^n$ is covered by a finite set of convex bodies $\{K_i\}$ then, for some symplectic capacity $c(\cdot)$ (the cases of interest are the Hofer--Zehnder capacity and the displacement energy),
\[
\sum_i c(K_i) \ge c(K).
\]
\end{conjecture}

In the moment such a conjecture seems rather unmotivated and the purpose of this text is to show that it has direct relation to the Bang problem. In particular, the known results by Ball~\cite{ball1991,ball2001} follow from this conjecture (and even its weaker form, Conjecture~\ref{conjecture:cylinders} explained below). Therefore positive results on this conjecture would be useful, because Ball's proofs are rather technical. The Bang conjecture for not necessarily centrally symmetric $K$ does not follow from this conjecture, but a worse estimate of the sum of widths, tending to $1/2$ for high dimensions, would follow from an appropriate version of Conjecture~\ref{conjecture:cylinders}. 

Theorem \ref{theorem:billiards-symmetrization} and Corollary \ref{corollary:weak-parallel-bang} present our partial results on the Bang problem. However, here we are only able to handle these results in a particular case of the Bang problem with ``almost parallel planks'' (see the explanation of this term in Theorem \ref{theorem:almost-par}), the general case being dependent on the subadditivity conjecture. 

\subsection{Organization of the paper}

In Sections \ref{section:symplectic}--\ref{section:cylinders} we explain how the results of Ball are expressed in terms of subadditivity of certain symplectic capacities. In particular, in Section \ref{section:billiards} we establish the estimate (Theorem \ref{theorem:billiards-symmetrization}) 
\[
c_{HZ}(K\times (K-K)^\circ) \ge 1 + \frac{1}{n},
\]
that extends the work of \cite{aaok2013,abksh2014} on lower bounds for symplectic capacities and close billiard trajectories of Minkowski billiards.

In Section \ref{section:subadd} we provide examples showing that the convexity assumption seems crucial in the subadditivity conjecture, prove this conjecture for the simple case of splitting the ball in $\mathbb C^n$ by a hyperplane cut (already generalized in a parallel work \cite{haimkislev2017}), and discuss other evidence of subadditivity.

In Section \ref{section:oscillation} we bound from below the oscillation of a function on a convex set given its differential is bounded from below. A version of this is obtained by a simple application of known symplectic results without any conjectural assumptions, but eventually it turns our that the optimal result of this kind (Theorem \ref{theorem:diff-general}) is proved without using any symplectic technique.

In Section \ref{section:two-dir} we establish another particular case of the Bang problem with elementary means, and in Section \ref{section:fractional} we discuss fractional versions of the Bang problem as well as covering of the Euclidean ball by Euclidean cylinders. Here symplectic techniques do not seem to give much.

\subsection*{Acknowledgments} 
The authors thank Yaron Ostrover for useful discussions and numerous remarks, Leonid~Polterovich for the useful Example \ref{example:curved-cylinders}, the unknown referees for lots of useful remarks, Wac{\l}aw Marzantowicz and Jakub Byszewski for scanning the old Polish papers~\cite{moese1932,tarski1931,tarski1932} for us.

\section{Displacing a Lagrangian product}
\label{section:symplectic}

Let us demonstrate the relation of the Bang conjecture to some notions of symplectic geometry. Denote by $V$ the ambient vector space of a convex body $K$, and let $V^*$ be its dual; we remind that a convex body is a compact convex set with nonempty interior. The reader may safely assume $V = V^* = \mathbb R^n$, but in order to use the symplectic viewpoint further we emphasize the duality and use the canonical bilinear product $\langle\cdot,\cdot\rangle : V\times V^*\to\mathbb R$.

Consider some norm $\|\cdot\|$ on $V$ whose unit ball we denote by $B$. Let $\|\cdot\|_*$ be the dual norm on $V^*$, its unit ball is the polar $B^\circ$ of $B$. In the Bang conjecture the natural choice of the norm $\|\cdot\|$ is the norm with the unit ball 
\[
K-K := \{x - y : x\in K,\ y\in K\},
\] 
but we do not restrict ourselves and allow arbitrary norms, unless otherwise stated. 

We always assume that the norms $\|\cdot \|$ and $\|\cdot\|_*$ are sufficiently smooth. For the Bang conjecture this is not a problem, since a standard approximation argument allows to approximate any convex body (the unit ball $B$ in this particular case) by an infinitely smooth and strictly convex body contained in it (or containing it). We also assume that $K$ has sufficiently smooth boundary when this is needed in the argument.

Our idea is to start from a covering of $K$ by planks $P_1,\ldots, P_m$ with widths $w_1,\ldots, w_m$ and show that a symplectic capacity of the convex body $K\times B^\circ\subset V\times V^*$ is bounded in terms of $\sum_i w_i$. Let us recall that a \emph{plank} is a closed region between a pair of parallel hyperplanes in $V$ and its \emph{width} is the distance between the hyperplanes in the norm $\|\cdot\|$ we work with. For arbitrary convex body $K\subset V$, we call the \emph{(minimal) width} of $K$ the minimal width of a plank containing $K$.

First of all, the space $V\times V^*$ is, in more general terms, the cotangent space of the manifold $V$. The cotangent space always inherits the canonical symplectic structure, which in this particular case is given by the formula
\[
\omega((x_1, y_1), (x_2, y_2)) = \langle x_1, y_2 \rangle - \langle x_2, y_1 \rangle.
\]

Consider the convex body $K\times B^\circ\subset V\times V^*$, which is called a \emph{Lagrangian product} in \cite{aao2012} and subsequent works, and try to figure out how the assumption that $K$ is covered with a set of planks of given total width bounds a symplectic capacity of $K\times B^\circ$. In this problem it is natural to start with the \emph{displacement energy}, which is a particular case of an external symplectic capacity of a subset of the symplectic space $V\times V^*$ (see \cite{hofer1994} for the details). The definition of the displacement energy operates with a time dependent compactly supported Hamiltonian $H(x, y, t)$ on $V\times V^*\times [T_1, T_2]$, whose \emph{total oscillation} is defined to be
\[
\|H\| = \int_{T_1}^{T_2} \sup_{x,y} H(x, y, t) - \inf_{x, y} H(x, y, t)\; dt,
\]
see~\cite[Ch.~5]{hofer1994} or~\cite{polt2001} for further details. Sometimes the segment $[T_1, T_2]$ is normalized to be $[0,1]$, but actually this and the following definitions do not depend on this normalization. It is always possible to scale the time segment by $\alpha$ and multiply the Hamiltonian by $1/\alpha$ without changing the result of the Hamiltonian flow.

The \emph{displacement energy} of $X\subset V\times V^*$, denoted by $e(K\times B^\circ)$, is the infimum of $\|H\|$ over all compactly supported time dependent Hamiltonians such that the corresponding time dependent Hamiltonian flow $\phi_t$ takes $X$ off itself, that is 
\[
X \cap \phi_{T_2}(X) = \emptyset. 
\]

Now we are ready to state the first version of the conjectured property of symplectic capacities that is related to the Bang problem:

\begin{conjecture}
\label{conjecture:plank-displacement}
If a convex body $K\subset V$ can be covered with a finite set of planks with the sum of widths (measured relative to the unit ball $B$) equal to $w$ then $e(K\times B^\circ)\le 2w$.
\end{conjecture}

Now we use a simple argument to establish a particular case of Conjecture \ref{conjecture:plank-displacement}. For every plank $P_i$ in question, we may choose either of the unit normals $\pm n_i\in V^*$, where by a \emph{unit normal} to a plank $P_i$ we mean an element $n_i\in V^*$ such that $\|n_i\|_*=1$ and the defining hyperplanes of $P_i$ are defined by $\{x\in V : \langle n_i, x\rangle = \const\}$.

\begin{theorem}
\label{theorem:almost-par}
Conjecture \ref{conjecture:plank-displacement} holds if the unit normals $n_i\in V^*$ of the planks can be chosen so that, for any sequence of non-negative coefficients $c_i$, at least one of which is $1$, the following inequality holds:
\begin{equation}
\label{equation:almost-par}
\left\|\sum_{i} c_i n_i \right\|_*\ge 1.
\end{equation}
\end{theorem}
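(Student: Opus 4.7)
The plan is to reduce the displacement bound to the existence of a single small-norm translation vector and then realize that translation by a Hamiltonian of oscillation at most $2w$.

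First, I will use LP duality (Farkas' lemma) to extract from the hypothesis a vector $v \in V$ with $\|v\| \leq w$ and $\langle n_i, v\rangle \geq w_i$ for every $i$. For any $\lambda$ in the probability simplex $\Delta^{m-1}$, the elementary inequality $\max_i \lambda_i \geq \sum_i \lambda_i (w_i/w)$ (valid because $\sum_i w_i/w = 1$ and $0 \leq w_i/w \leq 1$), combined with \eqref{equation:almost-par}, gives
\[\left\|\sum_i \lambda_i n_i\right\|_* \geq \sum_i \lambda_i (w_i/w),\]
which is exactly Farkas' sufficient condition for the existence of $u \in V$ satisfying $\|u\| \leq 1$ and $\langle n_i, u\rangle \geq w_i/w$ for all $i$. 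Then $v := wu$ does the job.

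Second, with a smooth cutoff $\chi \colon V^* \to [0, 1]$ equal to $1$ on $B^\circ$ and vanishing slightly outside, I take $H(x, y) := \chi(y)\langle v, y\rangle$. On $K \times B^\circ$ (where $\chi \equiv 1$ and $\nabla\chi = 0$) the time-$1$ flow acts as the translation $(x, y) \mapsto (x + v, y)$, and the total oscillation is at most $2\|v\| + O(\delta) \leq 2w + O(\delta)$, where $\delta$ measures the support of $\chi$ beyond $B^\circ$; letting $\delta \to 0$ gives the desired bound \emph{provided} the translation really displaces $K$ off itself.

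The final and hardest step is to verify $(K + v) \cap K = \emptyset$, possibly after replacing $v$ by $(1 + \epsilon)v$ with $\epsilon$ arbitrarily small. If $v = a - b$ with $a, b \in K$, then convexity places the segment $\gamma(t) = b + tv$ in $K$, and the planks covering $K$ meet this segment in intervals $T_i$ of length $w_i/\langle n_i, v\rangle$ whose union is $[0,1]$, so $\sum_i w_i/\langle n_i, v\rangle \geq 1$. Setting $c_i := w_i/\langle n_i, v\rangle$ one has $\langle \sum_i c_i n_i,\, v\rangle = w$, hence $\|\sum_i c_i n_i\|_* \geq w/\|v\|$. Combining this with \eqref{equation:almost-par} applied to $c$, and choosing $v$ at the LP extremum $\|v\| = w$, I aim to force $v$ out of $\operatorname{int}(K - K)$; a tiny dilation then lies strictly outside $K - K$ and yields strict displacement, with the oscillation tending to $2w$ in the limit.

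This third step is the chief difficulty. The almost-parallel hypothesis \eqref{equation:almost-par} is precisely what one expects to make the scheme work: it clusters the normals $n_i$ in a common cone in $V^*$, aligning the Farkas vector $v$ in such a way that it cannot sit strictly inside $K - K$. In the full generality of Conjecture~\ref{conjecture:plank-displacement} a pure translation does not suffice to displace, which is precisely the reason the general case appears to demand a Hamiltonian genuinely coupling the $x$ and $y$ variables rather than a cotangent lift of a constant shift.
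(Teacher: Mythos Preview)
Your approach is fundamentally different from the paper's, and the gap you yourself flag in the third step is not a technicality but the whole problem.

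The paper does \emph{not} translate in the base $V$; it shifts in the fibre $V^*$. For each plank $P_i$ it runs, for time $2$, the Hamiltonian that equals $0$ on one side of $P_i$, equals $w_i$ on the other, and is linear in between (independent of $y$). This pushes a point $(x,y)$ with $x\in P_i$ by $(0,2n_i)$. After processing all planks and smoothing, a point $(x,y)\in K\times B^\circ$ ends at $\bigl(x,\,y+2\sum_i c_i n_i\bigr)$ with $c_i\in[0,1]$ and at least one $c_i=1$ (for a plank whose interior genuinely contains $x$). Condition~\eqref{equation:almost-par} then gives $\bigl\|2\sum_i c_i n_i\bigr\|_*\ge 2$, so $y+2\sum_i c_i n_i\notin B^\circ$; nothing can drift back because every partial sum already has $\|\cdot\|_*$-norm $\ge 2$. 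The total oscillation is $\sum_i 2w_i=2w$. No property of $K$ beyond being covered is used, which is why the argument even works for non-convex $K$.

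Your scheme translates $K$ by a fixed $v\in V$. The Farkas step and the oscillation bound are correct, but your displacement condition $(K+v)\cap K=\emptyset$, i.e.\ $v\notin K-K$, with $\|v\|\le w$ is \emph{exactly} the statement that the minimal $\|\cdot\|$-width of $K$ is at most $\sum_i w_i$. For the norm with unit ball $K-K$ --- the case of primary interest --- every direction has width $1$, so your step reduces literally to $w\ge 1$, which is Bang's conjecture. Thus your route is circular: it would prove the displacement bound only by first proving the very inequality that the displacement bound is meant to attack. Your attempted contradiction does not close either: from $v\in K-K$ you extract two \emph{lower} bounds on $\|\sum_i c_i n_i\|_*$ (namely $w/\|v\|$ and, via \eqref{equation:almost-par}, $\max_j c_j$), and two lower bounds never contradict one another. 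Concretely, for $K=[0,N]^2$ covered by two parallel planks of width $N/2$, the Farkas vector $v=(N/2,0)$ satisfies all your constraints, has $\|v\|=w/2$, and sits deep inside $K-K$; nothing in your argument rules it out or selects a better $v$.
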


Let us call the assumption \emph{almost parallel planks}. In the Euclidean case this assumption is guaranteed by a simpler assumption that $n_i\cdot n_j \ge 0$ for any pair of indices. This rarely happens for a random set of normals.

\begin{proof}
Let a plank $P_i$ have width $w_i$. Consider the Hamiltonian $H_i(x, y, t)$ defined for $t\in [2i-2, 2i]$ so that $H_i$ is independent of $t$ and $y$, equals zero for $x$ on one side of $P$, equals $w_i$ for $x$ on the other side of $P_i$, and changes linearly in $x$ inside $P_i$. The effect of the corresponding (discontinuous!) flow is as follows: $(x,y)\in V\times V^*$ remains fixed if $x$ is outside $P_i$ and gets shifted by $(0, 2n_i)$, where $n_i = d_x H_i$ is the unit normal to $P_i$, for $x$ in the interior of $P_i$. For $x\in\partial P_i$ we have a discontinuity. From the definition it follows that the part $(K\cap P_i)\times B^\circ$ gets shifted outside $K\times B^\circ$, spending the total oscillation $2w_i$. 

The idea is to shift this way everything outside $K\times B^\circ$ in a finite sequence of such steps for all planks $P_i$. The total oscillation of such a sequence of Hamiltonians is therefore twice the sum of widths. To make this idea work we need some care. First, the function $H_i(x, y, t)$ is not smooth for $x\in \partial P_i$ and therefore the Hamiltonian flow is discontinuous. This could be remedied by a certain smoothening changing the value of $d H$ in a small neighborhood of $\partial P_i\times V^*$; but after that we have to keep in mind that some parts near boundaries of planks are ``incompletely shifted'' by vectors $(0, 2 c_i(x) n_i)$ for some $0\le c_i(x)\le 1$.

A more serious problem, is that we have made several shifts and it may happen that something, previously shifted outside $K\times B^\circ$, returns inside $K\times B^\circ$ on a subsequent shift. The assumption of the theorem means that the total shift of the $V^*$ component over a point $x\in V$, which is equal to
\[
\sum_i 2 c_i(x) n_i,
\]
always has $\|\cdot\|_*$-norm at least $2$ and therefore indeed shifts the considered point $(x,y)$ outside $K\times B^\circ$.
\end{proof}

\begin{remark}
Here we observe a strange phenomenon. The classical method of Bang works better when the planks are far from parallel, see~\cite{ball1992} for an impressive example. But the displacement energy approach presented above likes the opposite situation, when the planks are almost parallel.
\end{remark}

\begin{remark}
Yaron~Ostrover has noted in private communication that the proof of the above theorem does not use the convexity of $K$. This may be useful, though lower bounds for the symplectic invariants of $K\times B^\circ$ seem less accessible for non-convex $K$.
\end{remark}

It is well known that the Hofer--Zehnder symplectic capacity $c_{HZ}(U)$ (see the definition and discussion in~\cite{hofer1994}) gives a lower bound for the displacement energy of $U$, where $U$ is an open bounded set in $V\times V^*$. Therefore it makes sense to consider a version of Conjecture \ref{conjecture:plank-displacement} for the Hofer--Zehnder capacity in place of the displacement energy.

\begin{conjecture}
\label{conjecture:plank-hz}
If $K$ can be covered with a finite set of planks with the sum of relative widths equal to $w$ then $c_{HZ}(K\times B^\circ)\le 2w$.
\end{conjecture}

This conjecture is weaker than Conjecture \ref{conjecture:plank-displacement} in view of the inequality $c_{HZ}(X)\le e(X)$ (see \cite{hofer1994}). If fact, for convex bodies $X\subset\mathbb R^{2n}$ there is still no evidence that the different symplectic capacities may have different values, and therefore there is no evidence that the versions of this conjecture with different capacities are really different. Moreover, in \cite[Remark 4.2]{aaok2013} it was shown that for centrally symmetric convex $K\subset V$ and $T\subset V^*$ the value $c_{HZ}(K\times T)$ always coincides with the displacement energy and the cylindrical capacity of $K\times T$ just because $K\times T$ can be put to a convex symplectic cylinder (see also Definition \ref{definition:convex-cylinder} below) of capacity $c_{HZ}(K\times T)$. Therefore in the case of centrally symmetric $K$ and $B$ these conjectures coincide.

\section{Billiards and capacity}
\label{section:billiards}

\subsection{Overview of known results on the symplectic approach to billiards}

If Conjecture~\ref{conjecture:plank-hz} (or a similar conjecture) holds, in order to produce Bang-type results we still need to calculate of estimate from below the capacity $c_{HZ}(K\times B^\circ)$. Fortunately, Shiri Artstein-Avidan and Yaron Ostrover established~\cite{aao2012} a nice elementary description of this capacity (all the bodies are assumed to be sufficiently smooth):

\begin{theorem}[Artstein-Avidan, Ostrover, 2012]
\label{theorem:aao-billiards}
The Hofer--Zehnder capacity $c_{HZ}(K\times B^\circ)$ is equal to the length of the shortest closed Minkowski billiard trajectory in $K$, where the length is measured in the norm $\|\cdot\|$ with unit ball $B$ and the reflection rule reflects the momentum coordinate from one point on $\partial B^\circ$ to the other point on $\partial B^\circ$ by combining it with a multiple of the normal to $\partial K$ at the hit point.
\end{theorem}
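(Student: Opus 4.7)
The plan is to invoke the standard identification of the Hofer--Zehnder capacity of a bounded convex body $U\subset V\times V^*$ with the minimal action $A_{\min}(\partial U)$ of a closed characteristic on $\partial U$ (see~\cite{hofer1994}), and then to show that, for $U=K\times B^\circ$, closed generalized characteristics on $\partial U$ are exactly the closed billiard trajectories of the type described in the statement, with action equal to the $\|\cdot\|$-length.

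\medskip

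First I would analyze the Hamiltonian vector field on each smooth stratum of $\partial U$. On $\partial K\times\inte(B^\circ)$ use the defining function $H=\mu_K(x)$, where $\mu_K$ is the Minkowski gauge of $K$; the formula $X_H=(-\partial_y H,\partial_x H)$ gives $X_H=(0,\partial_x\mu_K(x))$, so the characteristic freezes $x$ on $\partial K$ and translates $y\in V^*$ along the outward normal to $\partial K$ at $x$. Symmetrically, on $\inte(K)\times\partial B^\circ$ take $H=\mu_{B^\circ}(y)$, obtaining $X_H=(-\partial_y\mu_{B^\circ}(y),0)$. By polar duality (and Euler's identity for $1$-homogeneous gauges), $v:=\partial_y\mu_{B^\circ}(y)\in V$ satisfies $\|v\|=1$ and $\langle y,v\rangle=1$ for $y\in\partial B^\circ$, so the characteristic traces a straight segment in $x$ at $\|\cdot\|$-unit speed while $y$ stays pinned on $\partial B^\circ$.

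\medskip

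Stitching these behaviors together along the corner stratum $\partial K\times\partial B^\circ$ gives exactly the billiard dynamics: straight $\|\cdot\|$-unit-speed $x$-segments, and at each hit of $\partial K$ the $y$-coordinate slides along the outward normal to $\partial K$ at the hit point until it lands again on $\partial B^\circ$, thereby selecting the new direction $-\partial_y\mu_{B^\circ}(y)$ for the next segment. This is precisely the reflection rule in the statement. Computing the action $\oint y\,dx$ along such a closed orbit contributes $\int\langle y,\dot x\rangle\,dt=-\int\langle y,v\rangle\,dt=-T$ on each $x$-segment, while the corner arcs contribute zero since $x$ is fixed there; accumulating, $|A|$ equals the total $\|\cdot\|$-length $L(\gamma)$ of the billiard loop. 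Consequently $A_{\min}(\partial U)$ equals the length of the shortest closed billiard trajectory of the prescribed type.

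\medskip

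The main technical obstacle is that the identity $c_{HZ}(U)=A_{\min}(\partial U)$ is classically established only for smoothly bounded strictly convex $U$, whereas $K\times B^\circ$ has a corner along $\partial K\times\partial B^\circ$. To handle this one approximates $U$ by smooth strictly convex bodies $U_\epsilon$ rounding the corner, applies the smooth capacity-action identity to $U_\epsilon$, and verifies that minimal-action closed characteristics on $\partial U_\epsilon$ converge, as $\epsilon\to 0$, to a closed generalized characteristic on $\partial U$ of the billiard type, with actions converging to the billiard length; and conversely that every closed billiard trajectory arises as such a limit. Controlling the passage through the rounded corner — matching the jump of $y$ along the normal with a short characteristic arc in $\partial U_\epsilon$ — is the delicate step and forms the technical core of~\cite{aao2011}.
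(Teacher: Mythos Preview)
The paper does not give its own proof of this theorem: it is quoted as a result of Artstein-Avidan and Ostrover, with the reference~\cite{aao2011}, and is used as a black box throughout (see also the following remark, which cites~\cite{abksh2014} to dispose of the closed-geodesic case). So there is nothing in the present paper to compare your argument against.

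That said, your outline is a faithful sketch of the argument actually carried out in~\cite{aao2011}: identify $c_{HZ}$ of a convex body with the minimal action of a closed characteristic on its boundary, analyze the characteristic foliation on the two smooth strata of $\partial(K\times B^\circ)$, patch them through the corner to obtain the Minkowski billiard dynamics, compute that the action equals the $\|\cdot\|$-length, and justify everything by a smooth approximation of the product body. One minor point: with the symplectic form $\omega$ as written in this paper, the Hamiltonian vector field convention gives $X_H=(\partial_y H,-\partial_x H)$ rather than your $(-\partial_y H,\partial_x H)$; this only flips the orientation of characteristics and the sign of the action, so it is harmless once you pass to $|A|$. You also do not mention the closed geodesics of $\partial K$ that~\cite{aao2011} allows as degenerate billiard trajectories, but the remark following the theorem explains why they can be ignored when seeking the minimum.
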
  

\begin{remark}
In~\cite{aao2012} closed geodesics of $\partial K$ were also considered as a particular case of a billiard trajectory, with length measured with $\|\cdot \|$ norm. But in~\cite{abksh2014} it was shown that such closed trajectories can never be shorter than the ordinary piece-wise linear trajectories that reflect at the boundary of $K$ a finite number (in fact not exceeding $\dim K+1$) of times.
\end{remark}

Conjecture~\ref{conjecture:plank-hz} and Theorem \ref{theorem:aao-billiards} imply the following claim: If for a smooth strictly convex body $K$ the shortest closed Minkowski billiard trajectories in $K$ (with $\|\cdot\|$-length) have length $L$ then any system of planks covering $K$ has the sum of $\|\cdot\|$-widths at least $L/2$. In the Bang conjecture we consider the norm with unit ball $K-K$. In particular, for the original version of the Bang conjecture we need $L=2$ and no less. Unfortunately, the following example shows that we cannot guarantee $L=2$ already in the plane.

\begin{example}
\label{example:triangle}
If $K$ is the triangle in the plane (from the affine invariance we may assume $K$ regular) and the norm is defined by $K-K$, then the small triangle formed by its midpoints of sides can be verified to be a closed Minkowski billiard trajectory and have relative length $3/2$. The triangle is not smooth, but it can be smoothened without increasing the number $3/2$ too much. Thus the billiard approach together with Conjecture \ref{conjecture:plank-hz} is \emph{not sufficient} to establish the Bang conjecture already in this simple case.

Moreover, for the Euclidean norm, the regular triangle of unit width has a billiard trajectory along the midpoints of length $\sqrt{3}$. So the billiard approach fails even for the known case of the Bang theorem, which estimates the sum of Euclidean widths of the covering planks by the Euclidean \emph{minimal width} of $K$.
\end{example}

This shows that the symplectic method is not directly applicable to the still open non-symmetric case of the Bang conjecture (although Theorem \ref{theorem:billiards-symmetrization} below shows it can still produce a very good estimate). On the positive side, for the already known symmetric case the required bound was established in~\cite{aaok2013}:

\begin{theorem}[Artstein-Avidan, Karasev, Ostrover, 2013]
\label{theorem:billiards-in-ball}
Let $\|\cdot\|$ be a smooth norm and let its dual $\|\cdot\|_*$ be also smooth. Then any closed billiard trajectory in the unit ball $B$, being measured with $\|\cdot\|$, has length at least $4$ and $c_{HZ}(B\times B^\circ) = e(B\times B^\circ) = 4$.
\end{theorem}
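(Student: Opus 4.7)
The plan is to deduce the theorem from Ball's plank theorem for centrally symmetric convex bodies, exploiting the fact that $B$, being a norm ball, is automatically symmetric. The bridge between the closed billiard trajectory and a plank covering is modeled on the Bezdek--Bezdek method: one attaches to each segment of the trajectory a plank whose width equals its length, and then argues that the resulting family covers $B$ with enough multiplicity.

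Setup: I would parametrize the trajectory by its vertices $q_1,\dots,q_n\in\partial B$ (with $q_{n+1}=q_1$) and associated momenta $p_1,\dots,p_n\in\partial B^\circ$. Putting $v_i=\nabla\|p_i\|_*\in\partial B$, one has $q_{i+1}-q_i=t_iv_i$ with $t_i>0$ and $\langle p_i,v_i\rangle=1$, while the reflection rule at $q_{i+1}$ reads $p_{i+1}-p_i\in\mathbb R\cdot n(q_{i+1})$, where $n(q)=\nabla\|q\|\in\partial B^\circ$. The $\|\cdot\|$-length of the trajectory equals
\[
L=\sum_i t_i=\sum_i\langle p_i,q_{i+1}-q_i\rangle,
\]
the second equality being the action identity that follows from $\langle p_i,v_i\rangle=1$.

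First, for each $i$ I would introduce the plank
\[
P_i=\{\,x\in V:\langle p_i,q_i\rangle\le\langle p_i,x\rangle\le\langle p_i,q_{i+1}\rangle\,\},
\]
which contains the segment $[q_i,q_{i+1}]$ and whose $\|\cdot\|$-width is exactly $t_i$, so that $\sum_i w(P_i)=L$. The key geometric step is to verify that the family $\{P_i\}$ covers $B$ with multiplicity at least two: every $x\in B$ belongs to at least two of these planks. Granted this, a ``double cover'' form of Ball's plank theorem for the centrally symmetric body $B$ yields $L=\sum_i w(P_i)\ge 2\cdot w(B)=4$, since the $\|\cdot\|$-width of $B$ equals $2$.

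The main obstacle is the double-covering claim. The essential intuition is that a closed polygonal loop must ``pass each interior point twice'', but the precise statement has to be adapted to plank containment, which depends on the varying normals $p_i$ and not on a fixed hyperplane direction. A natural line of attack is to parametrize $\gamma$ by arc length and, for each $x\in\inte B$, analyze the piecewise linear functions $s\mapsto\langle p_{i(s)},\gamma(s)-x\rangle$ on each segment, whose sign changes detect the indices $i$ with $x\in P_i$; the closedness of $\gamma$ together with the form of the reflection-induced jumps of the normals should force at least two such sign changes. A secondary technical ingredient is the multi-cover version of Ball's inequality itself, which one obtains either by a fractional splitting of the double cover into two simple covers (via a Hall-type argument) or by directly adapting the quadratic-optimization proof of Bang's lemma to the doubly covered situation.
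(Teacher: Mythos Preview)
The paper does not prove this theorem; it is quoted from \cite{aaok2013}, where the argument is a direct variational one based on the Bezdek--Bezdek characterization (the shortest trajectory cannot be translated into $\inte B$) combined with the central symmetry of $B$. Your route via Ball's plank theorem is different in spirit and would be interesting if it worked, but it has a decisive gap.

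The gap is the ``double cover form of Ball's plank theorem'' that you need in the last step. This statement is simply false once $\dim V\ge 4$. Section~7 of the present paper computes the optimal fractional covering constant for the Euclidean unit ball: a $k$-fold plank cover of $B^n$ is only guaranteed to have total Euclidean width $\ge kW_n$ with
\[
W_n=\int_{-1}^{1}(1-x^2)^{(n-3)/2}\,dx,
\]
and the random-plank construction given there produces genuine integer $k$-covers realizing this bound asymptotically. Already for $n=4$ one has $W_4=\pi/2<2$, so there exist $2$-fold plank covers of the Euclidean ball with total width strictly below $4$; in terms of the relative width (unit ball $B-B=2B$) this is strictly below $2$. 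Hence neither of your proposed arguments for the multi-cover inequality can succeed: a Hall-type splitting into two simple covers cannot exist in general, and Bang's quadratic optimization cannot be upgraded to force a ``once-covered'' point. Your scheme would therefore yield at best $L\ge 2W_n$, which degenerates as $n\to\infty$.

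A secondary issue is that your ``main obstacle'' --- that the planks $P_i$ cover $B$ with multiplicity two for every closed billiard trajectory --- is also left as a heuristic. Even if it were true, the above shows it is not enough; and note that the statement concerns \emph{all} closed trajectories, not just shortest ones, so the Bezdek--Bezdek non-translatability is not available to you in that step.
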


In this theorem, the segment $[x, -x]\in B$, where we take any $x\in \partial B$, passed forth and back is a closed billiard trajectory of length $4$ in the norm associated with $B$. Theorem~\ref{theorem:billiards-in-ball} asserts that this is the shortest one, and together with Conjecture~\ref{conjecture:plank-hz} implies Ball's theorem from~\cite{ball1991} about the Bang problem in the centrally symmetric case. Clearly, this result together with Theorem~\ref{theorem:almost-par} already gives a symplectic proof for the particular case of Ball's theorem, when the ``almost parallel planks'' assumption (\ref{equation:almost-par}) is satisfied.

For possibly non-symmetric convex bodies a similar result was established in~\cite{abksh2014} (in this theorem we allow a norm to violate the reflexivity property $\|x\| = \|-x\|$):

\begin{theorem}[Akopyan, Balitskiy, Karasev, Sharipova, 2014]
\label{theorem:billiards-non-reflexive}
Let $\|\cdot\|$ be a smooth non-symmetric norm in $\mathbb R^n$ and let its dual $\|\cdot\|_*$ be also smooth. Then any closed billiard trajectory in the unit ball $K$, measured with $\|\cdot\|$, has length at least $2+2/n$.
\end{theorem}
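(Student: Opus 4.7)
The plan is to adapt the duality approach of Theorem~\ref{theorem:billiards-in-ball}, replacing central symmetry with the Minkowski centroid inequality $-K\subseteq nK$; the factor $n$ there is ultimately responsible for the $2/n$ correction. Parametrize the trajectory by reflection points $x_1,\ldots,x_m\in\partial K$ (cyclically, $x_{m+1}=x_1$), edges $v_i=x_{i+1}-x_i$, and dual momenta $p_i\in\partial B^\circ$ satisfying $\langle v_i,p_i\rangle=\|v_i\|$. The reflection rule reads $p_i-p_{i-1}=-\lambda_i n_i$, where $n_i$ is the outer normal to $\partial K$ at $x_i$ and $\lambda_i>0$, and closure gives $\sum_i\lambda_i n_i=0$ in $V^*$. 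After placing $0\in\inte K$, so that $\langle x_i,n_i\rangle=h_K(n_i)=\|n_i\|_*$, an Abel summation yields
\[
L=\sum_i\|v_i\|=\sum_i\langle v_i,p_i\rangle=\sum_i\lambda_i\langle x_i,n_i\rangle=\sum_i\|p_i-p_{i-1}\|_*,
\]
so $L$ is the $\|\cdot\|_*$-perimeter of the closed polygon $(p_i)$ inscribed in $\partial B^\circ$.

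Because the identity $L=\sum_i\lambda_i h_K(n_i)$ is invariant under translations of $K$ (by $\sum_i\lambda_i n_i=0$), I would next translate $K$ so that its centroid lies at the origin. Minkowski's centroid theorem then gives $h_K(-u)\leq n\,h_K(u)$ for every $u\in V^*$, with equality when $K$ is a simplex: this is the only quantitative asymmetry bound available for a general convex body, and it is saturated precisely in the extremal simplex case.

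The remaining step is the geometric lower bound: any closed polygon inscribed in $\partial B^\circ$ that arises as the momentum polygon of a billiard trajectory in $K$ has $\|\cdot\|_*$-perimeter at least $2+2/n$. The trivial bound is $2$ (from a degenerate forth-and-back configuration), and in the symmetric case it improves to $4$ by choosing the second vertex to be $-p_0\in\partial B^\circ$. For general $K$ the asymmetry $h_K(-u)\leq n\,h_K(u)$ prevents complete pinching: the best ``antipode'' of $p_0$ accessible within $\partial B^\circ$ is at $\|\cdot\|_*$-distance at least $1+1/n$, giving $2+2/n$ for a round trip. I expect the main obstacle to be extending this ``antipodal'' reasoning to arbitrary polygons, i.e.\ showing that no multi-edge configuration can undercut the forth-and-back bound. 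A natural route is to split the vertices by a hyperplane through the origin chosen so that both halves carry comparable weight in the $\lambda_i n_i$ sense, and to invoke the Minkowski inequality on each half separately.
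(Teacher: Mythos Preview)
The paper does not actually prove this theorem; it is quoted from~\cite{abksh2014}. The method of that reference is, however, visible in the proof of the neighboring Theorem~\ref{theorem:billiards-symmetrization}: the crucial input is the structural fact from~\cite[Theorem~2.1]{abksh2014} that a shortest closed billiard trajectory has at most $n+1$ bounce points and cannot be translated into a smaller positive homothet of $K$; the length estimate then follows by dropping the longest edge and applying the elementary covering Lemma~\ref{lemma:covering-connected}. No duality, no momentum polygon, and no Minkowski centroid inequality enter.

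Your route is genuinely different, and the identity $L=\sum_i\|p_i-p_{i-1}\|_*$ (the billiard length equals the $\|\cdot\|_*$-perimeter of the momentum polygon inscribed in $\partial K^\circ$) is correct and attractive. The problem is that your ``remaining step'' is the whole proof, and the heuristics you offer do not close it. First, inscribed polygons in $\partial K^\circ$ can have arbitrarily small $\|\cdot\|_*$-perimeter (take two nearby boundary points back and forth), so a lower bound must exploit the billiard constraint linking the $p_i$ to points $x_i\in\partial K$; you never use that constraint after deriving the identity. Second, there is a tension you have not resolved: the equality $h_K(n_i)=\|n_i\|_*$ that gives the dual-perimeter interpretation requires the origin to be the apex of the gauge $K$, whereas Minkowski's inequality $h_K(-u)\le n\,h_K(u)$ requires the origin at the centroid; after the translation you lose the identification with $\|\cdot\|_*$, and you do not explain how $h_K(-u)\le n\,h_K(u)$ yields any lower bound on $\sum_i\lambda_i h_K(n_i)$ with $\sum_i\lambda_i n_i=0$. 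Third, the ``best antipode at distance $1+1/n$'' claim is not substantiated: you would need to show that for every $p_0\in\partial K^\circ$ there exists $q\in\partial K^\circ$ with $\|p_0-q\|_*+\|q-p_0\|_*\ge 2+2/n$, and moreover that this two-point configuration is actually the minimizer among momentum polygons; neither is argued. As it stands the proposal is a reformulation plus a hope, not a proof.
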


\subsection{Billiard estimate for the Lagrangian product in the Bang problem}

Now we prove one more estimate related to the non-symmetric case of Bang's problem. It resembles Theorem~\ref{theorem:billiards-non-reflexive}, but we give a separate proof and do not see if one of the results follows from another.

\begin{theorem}
\label{theorem:billiards-symmetrization}
Let $K$ be a smooth strictly convex body in $\mathbb R^n$. Consider the norm with the unit ball $B=K-K$, then any closed billiard trajectory in $K$ with this norm has length at least $1+\frac{1}{n}$.
\end{theorem}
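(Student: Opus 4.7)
My plan is to combine a support-function identity for the trajectory length with the classical centroid inclusion for $K$. Denote the trajectory by vertices $q_1,\dots,q_m\in\partial K$, edges $v_i=q_{i+1}-q_i$ of $\|\cdot\|_{K-K}$-length $t_i$, dual momenta $p_i\in\partial(K-K)^\circ$ normalized by $\langle p_i,v_i\rangle=t_i$, outer normals $n_i$ to $K$ at $q_i$, and reflection multipliers $\mu_i>0$ in the law $p_{i-1}-p_i=\mu_i n_i$. The telescoping identity $\sum_i(p_{i-1}-p_i)=0$ gives $\sum_i\mu_i n_i=0$. A direct Abel summation yields
\[L:=\sum_i t_i=\sum_i\langle p_i,v_i\rangle=\sum_i\langle p_{i-1}-p_i,q_i\rangle=\sum_i\mu_i\,h_K(n_i),\]
using that $q_i$ supports $K$ at $n_i$, so $\langle n_i,q_i\rangle=h_K(n_i)$.

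Now translate so that the centroid of $K$ sits at the origin. The classical inclusion $-K\subseteq nK$ is equivalent, via support functions, to $h_K(-\xi)\le n\,h_K(\xi)$ for every $\xi\in V^*$, and hence $(n+1)h_K(\xi)\ge h_K(\xi)+h_K(-\xi)=h_{K-K}(\xi)=\|\xi\|_*$. Substituting $\xi=n_i$ and weighting by $\mu_i$ yields
\[L\ge\frac{1}{n+1}\sum_i\mu_i\|n_i\|_*=\frac{1}{n+1}\sum_i\|p_{i-1}-p_i\|_*,\]
so the theorem reduces to a lower bound on the dual perimeter $\sum_i\|p_{i-1}-p_i\|_*$ of the closed polygon $p_1,\dots,p_m$ inscribed in the centrally symmetric sphere $\partial(K-K)^\circ$.

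The case $m=2$ short-circuits this reduction: the reflection law forces $n_1,n_2$ to be antipodal, and then the computation $\langle n_1,q_1-q_2\rangle=h_K(n_1)+h_K(-n_1)=h_{K-K}(n_1)$, together with $q_1-q_2\in K-K$, forces $\|q_1-q_2\|_{K-K}=1$, so $L=2\ge(n+1)/n$ directly. The main obstacle is the case $m\ge 3$, where one has to establish $\sum_i\|p_{i-1}-p_i\|_*\ge(n+1)^2/n$: the condition $\sum\mu_i n_i=0$ together with the reflection law forces the polygon $(p_i)$ to wrap non-trivially around the origin on $\partial(K-K)^\circ$, and I would try to extract the required perimeter bound from this non-triviality, either by a direct inscribed-polygon estimate on the symmetric body $(K-K)^\circ$ or by passing to a dual billiard picture via the Artstein-Avidan--Ostrover correspondence and invoking a refinement of Theorem~\ref{theorem:billiards-in-ball}.
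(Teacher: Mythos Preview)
Your reduction is correct and elegant: the Abel summation gives $L=\sum_i\mu_i h_K(n_i)$, and the centroid inclusion $-K\subseteq nK$ (hence $(n+1)h_K(\xi)\ge h_{K-K}(\xi)=\|\xi\|_*$) yields $L\ge\frac{1}{n+1}\sum_i\|p_{i-1}-p_i\|_*$. The $m=2$ case is also handled correctly. But the proposal is genuinely incomplete for $m\ge 3$: you need $\sum_i\|p_{i-1}-p_i\|_*\ge (n+1)^2/n$, you call this ``the main obstacle'', and neither suggested route closes it. A bare inscribed-polygon estimate cannot work, since an arbitrary closed polygon on $\partial(K-K)^\circ$ can have arbitrarily small $\|\cdot\|_*$-perimeter; you would have to use the billiard constraints in an essential way. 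As for the dual billiard route, the Artstein-Avidan--Ostrover correspondence does identify $(p_i)$ with a Minkowski billiard in $(K-K)^\circ$, but with the \emph{non-symmetric} body $K$ in the momentum slot; its action is $L$ itself, not the $\|\cdot\|_*$-perimeter, and Theorem~\ref{theorem:billiards-in-ball} would in any case only give $4$, which is already below $(n+1)^2/n$ for every $n\ge 2$. Note also that for the simplex your chain of inequalities is everywhere sharp (the centroid inclusion is an equality on the facet normals), so the target $(n+1)^2/n$ is exactly attained and cannot come from any soft argument.

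The paper's proof is different and avoids this difficulty. It restricts attention to a \emph{shortest} trajectory and imports from~\cite{abksh2014} two facts: such a trajectory has at most $n+1$ bounce points, and its vertex set cannot be covered by any smaller positive homothet of $K$. A short covering lemma (Lemma~\ref{lemma:covering-connected}) then says that any connected graph of $\|\cdot\|_{K-K}$-length $h<1$ fits inside a translate of $hK$. Deleting any single edge from the closed polygonal trajectory leaves a connected path that still cannot fit, hence has length at least $1$; since some edge has length at least $L/(n+1)$, this gives $\bigl(1-\tfrac{1}{n+1}\bigr)L\ge 1$, i.e.\ $L\ge (n+1)/n$. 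This argument is complete and uses the bound $m\le n+1$ in an essential way, whereas your approach never controls $m$ and pushes all the difficulty into the unproved perimeter estimate.
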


We postpone the proof and discuss the result first. This estimate is obviously tight for $n=1,2$, and is actually tight for $n\ge 3$, as it was checked by Yoav~Nir~\cite[Ch.~4]{nir2013}. In fact, a closed polygonal line with vertices at all the centers of mass of facets of any simplex $K$ is a closed billiard trajectory in $K$ with respect to the norm with unit ball $K-K$.

Assuming Conjecture~\ref{conjecture:plank-hz} (or a similar conjecture), this theorem would imply a weaker result than the Bang conjecture, that is the sum of relative widths of planks would be proved to be at least $\frac{n+1}{2n}$. This is not what was conjectured by Bang, but would be a good step towards the Bang conjecture. Evidently, for almost parallel planks we have the following corollary:

\begin{corollary}
\label{corollary:weak-parallel-bang}
Assume a convex body $K\subset V$, $\dim V = n$, is covered by planks $P_1,\ldots, P_N$. Let us measure the widths with the norm $\|\cdot\|$ with unit ball $K-K$, and assume that the unit (with respect to the dual norm) normals to the planks, $n_1,\ldots, n_N\in V^*$ can be chosen ``almost parallel'', that is for any sequence of non-negative coefficients $c_i$, at least one of which is $1$, the following inequality holds:
\begin{equation}
\left\|\sum_{i} c_i n_i \right\|_*\ge 1.
\end{equation}
Then the sum of widths of the planks ca be estimated as
\[
\sum_i w(P_i) \ge \frac{n+1}{2n}.
\]
\end{corollary}

Now we go down to the proof of Theorem \ref{theorem:billiards-symmetrization}. We need the following lemma to prove this theorem and Theorem~\ref{theorem:diff-general} below. 

\begin{lemma}
\label{lemma:covering-connected}
Let $K\subset\mathbb R^n$ be a convex body and $\|\cdot\|$ be the norm with unit ball $K-K$. If $C\in\mathbb R^n$ is a connected graph with total $\|\cdot\|$-length at most $h$, then $C$ can be covered by a translate of the homothet $h K$.
\end{lemma}

\begin{remark}
If we consider arbitrary centrally symmetric norm $\|\cdot\|_B$ with unit ball $B$, not connected to $K-K$, then Lemma \ref{lemma:covering-connected} holds true with the modified assumption: $\|\cdot\|_B$-length of the graph must be at most $h w_{B}(K)$, where $w_{B}(K)$ is the minimal $\|\cdot\|_B$-width of $K$. This generalization evidently follows from the inequality $w_B(K) \|\cdot\|_{K-K} \le \|\cdot\|_B $.
\end{remark}

\begin{proof}[Proof of Lemma \ref{lemma:covering-connected}]
We may assume that $C$ has straight line segments as edges. For an edge $[a,b]$ the inequality 
\[
\|a - b\| \le \delta
\]
in the norm with unit ball $K-K$ is equivalent to saying that $[a,b]$ can be covered with a translate of $\delta K$. So we cover all edges of $C$ (that is the whole $C$) by translates $\delta_1K+t_1, \ldots, \delta_mK+t_m$ with 
\[
\delta_1 + \delta_2 + \dots + \delta_m \le h.
\]

Then we observe that if two sets $\delta_iK+t_i$ and $\delta_j K + t_j$ intersect then they can be covered by a single set $(\delta_i+\delta_j)K + t'$. Indeed, we may consider $K$ smooth and strictly convex, having in mind an approximation argument. Then the smallest homothet $K_{ij} = hK + t'$ containing $K_i = \delta_i K + t_i$ and $K_j= \delta_j K + t_j$ must have a common supporting hyperplane with $K_i$ at a point $p_i\in \partial K_{ij}\cap\partial K_i$ and must have a common supporting hyperplane with $K_j$ at a point $p_j\in \partial K_{ij}\cap\partial K_j$. Moreover, from the minimality of this homothet $K_{ij}$ we may conclude that the supporting hyperplanes to $K_{ij}$ at $p_i$ and $p_j$ are parallel, see Figure~\ref{fig:two-body}. 
Hence the segment $p_ip_j$ is covered by a translate of $hK$, but if we make this segment slightly longer then it will no more be covered by a translate of $hK$; from the definition of the norm $\|\cdot\|$ this means $\|p_j - p_i\| = h$. But if $p_{ij}\in K_i\cap K_j$, then from the same definition
\[
\|p_i - p_{ij}\|\le \delta_i,\quad \|p_j - p_{ij}\| \le \delta_j \Rightarrow h = \|p_j - p_i\| \le \delta_i + \delta_j
\]
from the triangle inequality.

Using the connectedness of $C$ we can repeat this step several times to cover the whole $C$ with a translate of $(\delta_1+\dots + \delta_m)K$.
\end{proof}

\begin{center}
\includegraphics{fig-symplectic-bang-3.mps}\\
\f \label{fig:two-body} 
\end{center}

\begin{proof}[Proof of Theorem~\ref{theorem:billiards-symmetrization}]
By~\cite[Theorem~2.1]{abksh2014} the shortest closed billiard trajectory in $K$ has at most $n+1$ bounce points $\{q_i\}_{i=1}^m$ and cannot be covered by a smaller positive homothet of $K$. Applying Lemma~\ref{lemma:covering-connected} (explained later) to the closed trajectory with one segment removed we have:
\[
\sum_{i=2}^m \|q_i - q_{i-1}\| \ge 1,
\]
If $L$ is the $\|\cdot\|$-length of the closed polygonal line $q_1,q_2,\ldots, q_m,q_1$ then the above inequality is a lower bound for $L$ minus the length of the segment $[q_m, q_1]$. The same argument applies to any other segment, and since at least one of them has length at least $\frac{L}{n+1}$ (remember that $m\le n+1$) then 
\[
\left(1 - \frac{1}{n+1}\right) L \ge 1,
\]
that is $L \ge \frac{n+1}{n}$.
\end{proof}

\begin{remark}
Following~\cite{aaok2013} and assuming a version of Claude Viterbo's conjecture~\cite{vit2000} (volume of a convex $X\subset \mathbb R^{2n}$ is at least $\frac{c_{HZ}(X)^n}{n!}$), this theorem would also imply a Mahler-type inequality:
\begin{equation}
\label{equation:bad-viterbo}
\vol K \cdot \vol (K-K)^\circ \ge \frac{\left(1+\frac{1}{n}\right)^n}{n!}.
\end{equation}
The unknown referee has made the following observation about this inequality. Combining the classical Rogers--Shephard inequality
\[
\vol K \ge \frac{1}{\binom{2n}{n}} \vol (K-K)
\]
with the conjectured Mahler inequality
\[
\vol (K-K) \cdot \vol (K-K)^\circ \ge \frac{4^n}{n!}
\]
we obtain
\[
\vol K \cdot \vol (K-K)^\circ \ge \frac{4^n}{n! \binom{2n}{n}},
\]
which is of order $\frac{\sqrt{n}}{n!}$ and is better than \eqref{equation:bad-viterbo}. A particular conclusion is that $K\times (K-K)^\circ$ is far from symplectic balls or other convex bodies that satisfy the Viterbo inequality.
\end{remark}




\section{Covering by symplectic cylinders}
\label{section:cylinders}

In~\cite{ball2001} Ball established another result similar to the Bang problem: When the unit ball in $\mathbb C^n$ is covered by unitary cylinders $Z_i$ of radii $r_i$ then $\sum_i r_i^2\ge 1$. Here $\mathbb C^n$ is endowed with the standard Hermitian form
\[
h(u, v) = \sum_{i=1}^n u_i \overline{v_i}
\]
and the corresponding norm $\|u\| = \sqrt{h(u,u)}$. The unit ball is considered in this metric, and a unitary cylinder of radius $r$ is an $r$-neighborhood (in the Hermitian norm) of a complex hyperplane.

This result seems even more suitable for the application of symplectic methods, because $\mathbb C^n$ itself has the symplectic structure $\Im h$ and the unitary cylinders $Z_i$ are symplectic cylinders with the capacities $c_{HZ}(Z_i) = e(Z_i)=\pi r_i^2$. So we are forced to state a more general conjecture, that would also imply Conjecture~\ref{conjecture:plank-hz}.

\begin{definition}
\label{definition:convex-cylinder}
For a convex body $S\subset \mathbb C$ call $Z = S\times \mathbb C^{n-1}\subset \mathbb C^n$, and all its images under linear symplectic transformations plus translations, a \emph{convex symplectic cylinder} with cross-section $S$.
\end{definition}

It is relatively clear that, for such linear and even more general non-linear images of such cylinders, the invariants $c_{HZ}(Z) = e(Z)$ equal the area of $S$. In general, the term ``symplectic cylinder'' means arbitrary symplectomorphic image of a standard cylinder, but here we are only interested in the linear images of convex cylinders. The corresponding version of our conjecture now becomes:

\begin{conjecture}
\label{conjecture:cylinders}
If a convex body $K\subset \mathbb C^n$ is covered by a finite set of convex symplectic cylinders $\{Z_i\}$ then 
\[
c_{HZ}(K) \le \sum_i c_{HZ}(Z_i).
\]
\end{conjecture}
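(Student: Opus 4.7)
The plan is to adapt the displacement-composition strategy used in the proof of Theorem~\ref{theorem:almost-par} to the cylinder setting, and then to convert from displacement energy to Hofer--Zehnder capacity via the standard energy--capacity inequality $c_{HZ}(U)\le e(U)$. Since $c_{HZ}(Z_i)=e(Z_i)$ equals the area of the cross-section, it would suffice to construct a Hamiltonian isotopy $\Phi$ of $\mathbb C^n$ with total oscillation arbitrarily close to $\sum_i c_{HZ}(Z_i)$ and with $\Phi(K)\cap K=\emptyset$; combined with the above, this would yield
\[
c_{HZ}(K)\le e(K)\le\sum_i c_{HZ}(Z_i).
\]

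Concretely, for each $i$ and each $\varepsilon>0$, I would pick a symplectomorphism $\Psi_i$ of $\mathbb C^n$ putting $Z_i$ in standard form $S_i\times\mathbb C^{n-1}$, choose a compactly supported Hamiltonian $h_i$ on the $\mathbb C$ factor of oscillation at most $c_{HZ}(Z_i)+\varepsilon/N$ whose time-one flow displaces $S_i$ from itself, and use a cutoff in the remaining coordinates (equal to $1$ on a ball large enough to contain $\Psi_i(K)$) to extend $h_i$ to a compactly supported Hamiltonian on $\mathbb C^n$ of essentially the same oscillation. Pulling back by $\Psi_i^{-1}$ produces $H_i$ on $\mathbb C^n$ whose time-one flow $\phi_i$ displaces $Z_i\cap K$. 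Concatenating $\phi_1,\dots,\phi_N$ over disjoint time intervals yields a global flow $\Phi$ of total oscillation at most $\sum_i c_{HZ}(Z_i)+\varepsilon$; what remains is to verify $\Phi(K)\cap K=\emptyset$.

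The main obstacle, and the reason this is stated only as a conjecture, is that the composition $\Phi=\phi_N\circ\cdots\circ\phi_1$ need not displace $K$ even though each $\phi_i$ displaces $Z_i\cap K$. For $p\in K\cap Z_i$ the earlier flows $\phi_1,\dots,\phi_{i-1}$ may push $p$ out of $Z_i$, so that $\phi_i$ no longer catches it, and the later flows $\phi_j$ with $j>i$ may drag a displaced point back into $K$ through some other cylinder. This is precisely the interference that the ``almost parallel'' hypothesis~(\ref{equation:almost-par}) is engineered to rule out in Theorem~\ref{theorem:almost-par}. A reasonable first partial result would therefore be to isolate an analogous \emph{almost parallel cylinders} condition -- for instance, that all $Z_i$ are translates of unitary cylinders with axes making small mutual angles, so that each $\phi_j$ can be arranged to act as the identity on $Z_i$ for $i\neq j$ -- under which the composition argument would go through verbatim and would recover the corresponding restricted case of Ball's theorem from~\cite{ball2001}. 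Handling the general case appears to require a genuinely new ingredient beyond additive use of displacement energies, most plausibly a spectral or Floer-theoretic invariant of the pair $(K,\{Z_i\})$ that is subadditive under covers by construction.
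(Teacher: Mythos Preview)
The statement you are addressing is a \emph{conjecture} in the paper, not a theorem; the paper offers no proof of it, and your write-up correctly reflects this by identifying the obstruction rather than claiming a complete argument. Your diagnosis is accurate and matches the paper's own discussion: the displacement-by-concatenation strategy is exactly what proves Theorem~\ref{theorem:almost-par}, and the failure mode you describe---earlier flows pushing points out of later cylinders, later flows dragging displaced points back into $K$---is precisely why the argument does not extend beyond the almost-parallel regime. Section~\ref{section:subadd} of the paper elaborates on this, giving explicit examples where subadditivity of capacities fails for non-convex pieces (even starshaped ones), and explaining why the natural ``decompose a symplectomorphism'' approach via Viterbo's action selector $c_+$ also breaks down.

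One small caution: your proposed partial result under an ``almost parallel cylinders'' hypothesis is plausible in spirit, but note that for genuine symplectic cylinders the displacing flow $\phi_i$ is not a translation in a fixed direction (as it is for planks), so the condition ``each $\phi_j$ acts as the identity on $Z_i$ for $i\neq j$'' is much harder to arrange than a sign condition on inner products of normals. Even for unitary cylinders with nearly parallel axes, the displacing Hamiltonian for a round disk cross-section is a rotation-type flow, not a shift, and it will generically move points within every other cylinder. So the analogue of~(\ref{equation:almost-par}) in this setting would need to be formulated with more care than you suggest; the paper does not attempt this and leaves the cylinder conjecture fully open.
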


Example \ref{example:curved-cylinders} and other examples in Section \ref{section:examples} show that the convexity assumption for the cylinders is crucial in this conjecture. We state two obvious by now lemmas:

\begin{lemma}
Ball's complex plank theorem follows from Conjecture \ref{conjecture:cylinders}.
\end{lemma}

\begin{proof} 
The unit ball $B\subset\mathbb C^n$ has capacity $c_{HZ}(B) = \pi$ and a unitary cylinder of radius $r$ is a particular case of a convex symplectic cylinder with all capacities equal to $\pi r^2$.
\end{proof}

\begin{lemma}
Conjecture~\ref{conjecture:plank-hz} follows from Conjecture \ref{conjecture:cylinders}.
\end{lemma}

\begin{proof}
For a convex body $K\subset\mathbb R^n$ (not in $\mathbb C^n$) and every plank $P_i$ of its covering, the convex body $P_i\times B^\circ\subset V\times V^*$ can be covered by a symplectic cylinder of capacity $2w(P_i)$. Indeed if $n_i\in V^*$ is the unit normal of $P_i$ (with $\|n_i\|_* = 1$) and $P_i$ is given by the inequality $\{a \le \langle n_i ,x \rangle \le b\}$, then $w(P_i) = b - a$. Also take a vector $v_i$ such that $\|v_i \| = 1$ and $\langle n_i ,v_i \rangle = 1$. Then the set
\[
Z_i = \left\{(x,y)\in V\times V^* : x \in P_i,\ \left| \langle y, v_i\rangle \right|  \le 1 \right\}
\]
is a convex symplectic cylinder ($n_i$ and $v_i$ produce a pair of symplectic canonical coordinates) of capacity $2w(P_i)$.

Since $K$ is covered by the $P_i$, the product $K\times B^\circ$ is covered by the $P_i\times B^\circ$, and is therefore covered by the convex symplectic cylinders $Z_i$. It remains to apply Conjecture~\ref{conjecture:cylinders}.
\end{proof}

Conjecture \ref{conjecture:cylinders} looks like a \emph{subadditivity property} of the Hofer--Zehnder capacity: The capacity of the union is at most the sum of capacities. In the next section we collect negative and positive evidence on the subadditivity property of symplectic capacities.

\section{Evidence on the subadditivity of capacities}
\label{section:subadd}

\subsection{Examples when subadditivity fails}
\label{section:examples}

In this section we discuss possible subadditivity properties of a symplectic capacity in more detail. We provide examples, which would show the absence of subadditivity of capacities when the convexity assumption is dropped. There exist a range of capacities (see for example~\cite{fgs2004}) $c(X)$ between the Hofer--Zehnder capacity $c_{HZ}(X)$ and the displacement energy $e(X)$, our examples actually apply to any such capacity.

\begin{example}
The first example is very simple. Let $B$ be a unit disc in the plane an let $S_-$ and $S_+$ be the halves of its boundary. If we thicken $S_-$ and $S_+$ slightly, their capacities still remain very close to zero. But their union has at least the same capacity as $B$ itself, which is $\pi$.
\end{example}

\begin{example}
In the notation of Section~\ref{section:symplectic}, let $B$ be the unit ball in the Euclidean space $\mathbb R^n$, the polar unit ball will be identified with $B$. Then the set $X = (B\setminus rB)\times B$ has displacement energy at most $2(1-r)$, the Hamiltonian
\[
H(x) = \left\{
\begin{array}{ll}
1-r,& \|x\| \le r\\
1-\|x\|,& r\le \|x\| \le 1\\
0,& \|x\|\ge 1
\end{array}
\right.
\]
does the job after certain smoothening, because its gradient has norm is at least $1$ over the base $B\setminus rB$ of $X$, which is sufficient to displace the ball bundle over this base constituting the set $X$. Hence the capacity if this set is also at most $2-2r$. The other part $Y = rB\times B$ obviously has displacement energy and any capacity at most $4r$. 

In total we have at most $2+2r$ for $c(X)+c(Y)$, but the union set $X\cup Y = B\times B$ has $c(B\times B)\ge 4$, according to Theorem~\ref{theorem:aao-billiards}. Here the set $X$ was topologically nontrivial, but we can easily remove a cylinder of radius $r$, passing from the origin to the boundary of $B$ from $X$, and add this cylinder to $Y$, without increasing the capacity of $Y$ by Theorem~\ref{theorem:aao-billiards}. Moreover, after removing the cylinder it is possible to slightly modify $X$ and $Y$ so that they both, together with their intersection, become starshaped. In this example the sets and their intersection cannot be distinguished from convex sets from the topological viewpoint.
\end{example} 

\begin{example}
\label{example:curved-cylinders}
We describe the construction communicated by Leonid~Polterovich (a version of which appeared in \cite{rudyakschlenk2007}) showing that the subadditivity fails when we cover any set by not necessarily convex symplectic cylinders, that is non-linear symplectomorphic images of symplectic cylinders. First observe that any bounded subset $K\subset \mathbb C^n$ can be covered by a cubic grid with diameter of cubes at most $\varepsilon>0$. Then we can partition all the cubes of the grid into $2^{2n}$ disjoint families of cubes, ``colors'', sorting them by the parity vectors of their coordinates. A more careful procedure with a modified grid \cite{rudyakschlenk2007} allows to use $2n+1$ colors, but here it is not relevant.

After that we consider a color of disjoint small cubes and produce a Hamiltonian symplectomorphism $\phi : \mathbb C^n \to \mathbb C^n$ that keeps the shape of all these cubes, but arranges their centers along a given straight line. Indeed, we can  continuously move (in a certain order of the cubes) a small cube $C_i$ to its desired position $C_i'$ on the line so that it does not come more than $\delta$ close to any other cube of the same color during this movement, for some positive $\delta < \epsilon$. This motion corresponds to a Hamiltonian motion of the whole $\mathbb C^n$ (with a time-dependent linear on $\mathbb C^n$ Hamiltonian), and it is possible to modify this time dependent Hamiltonian so that it remains the same on the moving cube and becomes zero outside the $\delta$-neighborhood of the moving cube. Thus modified Hamiltonian symplectomorphism moves one cube to its desired place not touching the other small cubes of the same color; composing several such symplectomorphisms we arrange all the cubes of a given color in a line and easily cover them by a symplectic cylinder of capacity at most $\epsilon^2$.

Looking at the situation the other way, we cover the original color of disjoint cubes by the inverse symplectomorphic image of the final cylinder of capacity $\epsilon^2$. Applying the above observation to every one of the $2^{2n}$ colors of small disjoint cubes, we spend the total capacity at most $2^{2n} \varepsilon^2$ to cover them with cylinders. This example shows that in the case of covering by cylinders their convexity must be essential.
\end{example}

The given examples show that the subadditivity seems to strongly depend on convexity, thus we only restrict ourselves to convex sets in Conjecture \ref{conjecture:subadditivity}.

\subsection{Cutting the Euclidean ball into two convex pieces}

In~\cite[Theorem~2.2]{zehm2012} an opposite inequality for the Hofer--Zehnder capacity was proved in a particular case using pseudoholomorphic curves: If two disjoint convex bodies $K_1$ and $K_2$ are contained in the Euclidean ball $B\subset\mathbb C^n$ then 
\[
c_{HZ}(K_1) + c_{HZ}(K_2) \le c_{HZ}(B).
\]
In view of the monotonicity of capacities and the hyperplane separation of convex bodies by the Hahn--Banach theorem, this inequality is equivalent to its particular case when the two convex bodies are produced by a hyperplane cut of the ball. For a hyperplane cut of the ball, the validity of Conjecture~\ref{conjecture:subadditivity} would thus imply equality; we prove this equality directly, using that the characteristics on the boundary of the ball are relatively easy to understand.

\begin{lemma}
If a ball $B\subset\mathbb R^{2n}$ is cut by a hyperplane into pieces $K_1$ and $K_2$ then 
\[
c_{HZ}(K_1) + c_{HZ}(K_2) = c_{HZ}(B).
\]
\end{lemma}

\begin{remark}
After the preprints of this text appeared, Pazit Haim-Kislev proved in \cite{haimkislev2017}, using Frank H. Clarke's approach to closed characteristics from \cite{clarke1979}), that the subadditivity actually holds for any hyperplane cut of any convex body $K\subset\mathbb C^n$, thus providing much stronger evidence of the subadditivity property.
\end{remark}

\begin{proof}
Assume that the radius of $B$ is $1$ and identify $\mathbb R^{2n} = \mathbb C^n$. Using the transitivity of the $\mathrm{U}(n)$ action on $\partial B$ assume that the cutting hyperplane is $\Pi = \{\Re z_1 = \cos \tau_0\}$.

Let us use the description of the Hofer--Zehnder capacity of convex bodies in terms of the minimal action of a closed characteristic on the boundary. Obviously, the closed characteristic $(e^{it}, 0, \ldots, 0)\subset \partial B$ is broken by $\Pi$ into two closed characteristics of $K_1$ and $K_2$ respectively, and their actions sum up to $c_{HZ}(B) = \pi$. This establishes 
\[
c_{HZ} (K_1) + c_{HZ} (K_2) \le c_{HZ} (B) = \pi, 
\]
moreover, this also proves that $K_1$ and $K_2$ can be covered by convex symplectic cylinders based on the two-dimensional sections of $K_1$ and $K_2$ with sum of the capacities equal to $\pi$.

It remains to show that other closed characteristics on $\partial K_1$ or $\partial K_2$ have larger actions.

Assume a closed characteristic for $K_1 = \{\Re z_1 \ge \cos \tau_0\}$ starts at a point $(z_1, \ldots, z_n)$ such that $\Re z_1 = \cos \tau_0$ and $\Im z_1 < 0$. Put $z_1 = \rho e^{-i\tau}$. We must have $\rho\le 1$ and if $\rho=1$ then this closed characteristic is the one already considered. So we assume $\rho < 1$.

Let us check how this point evolves along a characteristic on $\partial K_1$. First, it moves along $\partial B$ as $(\rho e^{i(-\tau + t)}, z_2e^{it}, \ldots, z_ne^{it})$ for $t\in[0, 2\tau]$. Then it moves in $\Pi$ along the direction of $\Im z_1$ getting from $(\rho e^{i\tau}, z_2e^{2i\tau}, \ldots, z_ne^{2i\tau})$ to $(\rho e^{-i\tau}, z_2e^{2i\tau}, \ldots, z_ne^{2i\tau})$. Then everything is repeated. In order for this point to get to its original position we must have 
\[
\tau = \pi\frac{k}{m}.
\]
Then the total number of turns will be $m$ and the action will be:
\[
A = k \left( \rho^2 (\tau - \sin\tau \cos\tau) + \pi (1 - \rho^2) \right),
\]
What remains to show is the inequality:
\[
\rho^2 (\tau - \sin\tau \cos\tau) + \pi (1 - \rho^2) \ge \tau_0 - \sin\tau_0 \cos \tau_0.
\]
Note that $x-\sin x\cos x$ increases on $[0, \pi]$ from $0$ to $\pi$. 
In the case $\tau, \tau_1\ge \pi/2$ we have
\[
\rho^2 (\tau - \sin\tau \cos\tau) + \pi (1 - \rho^2) \ge \tau - \sin\tau \cos\tau \ge \tau_0 - \sin\tau_0 \cos \tau_0,
\]
because from $\rho \cos \tau = \cos \tau_0$ it follows that $\tau \ge \tau_0$ when they both are greater than $\pi/2$. 

In the remaining case $\tau \le \tau_0 \le \pi/2$ we substitute $\rho \cos \tau = \cos \tau_0$ and we have to prove the inequality
\[
\cos^2\tau_0 (\tau - \sin\tau \cos\tau - \pi) + \pi \ge \cos^2\tau (\tau_0 - \sin\tau_0 \cos \tau_0).
\]
In the considered range the left hand side is increasing and the right hand side is decreasing in $\tau$, hence it remains to consider the case $\tau = 0$, when the inequality is 
\[
\pi - \pi \cos^2 \tau_0 \ge \tau_0 - \sin\tau_0 \cos \tau_0.
\] 
Putting $x = 2\tau_0\in (0, \pi)$ and using the trigonometric identities we have to prove 
\[
\pi(1 - \cos x) \ge x - \sin x,
\]
the latter is true since for $x\ge \pi/2$ the left hand side is at least $\pi$ and the right hand side is at most $\pi$, while for $x\in [0, \pi/2]$ the inequality is true since it holds for $x=0$ and after taking the derivative is becomes $\pi\sin x\ge 1 - \cos x$, which is obviously true for $x\in [0, \pi/2]$.
\end{proof}

\subsection{Decomposing Hamiltonian symplectomorphisms}

Let us just mention one sort of subadditivity that exists near the notion of a symplectic capacity. One way is to define capacities through action selectors that choose an action of a fixed point of a compactly supported Hamiltonian symplectomorphism $\phi : \mathbb R^{2n} \to \mathbb R^{2n}$ through certain topological constructions. In particular, Viterbo in~\cite{vit1992} uses the generalized generating functions for Hamiltonian symplectomorphisms and defines an action selector $c_+(\phi)$ for compactly supported Hamiltonians $\phi :\mathbb R^{2n}\to \mathbb R^{2n}$ satisfying
\[
c_+(\phi\psi) \le c_+(\phi) + c_+(\psi).
\]
This action selector gives rise to a symplectic capacity $c_V(U)$ for open bounded subsets $U\subset \mathbb R^{2n}$, as defined in~\cite[Definition~4.11]{vit1992} by 
\[
c_V(U) = \sup\{c_+(\phi) : \supp \phi\subset U\}.
\]

Now, in order to have a subadditivity for $c_V$ it were sufficient to have a claim like this: For two bounded open subsets $U, V\subset \mathbb R^{2n}$ and a Hamiltonian symplectomorphism $\tau$ supported in $U\cup V$, there exist two Hamiltonian symplectomorphisms $\phi$ and $\psi$ with supports in $U$ and $V$ respectively such that $\tau = \phi\circ \psi$. \emph{Unfortunately}, this claim cannot be true, since any $\tau$ taking a point from $U\setminus V$ to $V\setminus U$ cannot be decomposed this way. Already in the plane we can consider $U$ and $V$ as unions of several disjoint squares obtained from $Q_0=(-\varepsilon, 1 + \varepsilon)\times (-\varepsilon, 1 + \varepsilon)$ by translations $(m, 0)$, with $m$ odd for $U$ and even for $V$. If $U\cup V$ is connected and consists of $2N$ copies of $Q_0$, then an appropriately chosen diffeomorphism of it evidently cannot be decomposed in less than $2N$ diffeomorphisms supported in either $U$ or $V$. Of course this latter example does not apply to connected sets.

If one wants to utilize this decomposition approach somehow, some extra properties like the convexity of $K$ in Conjecture~\ref{conjecture:subadditivity} must be used. For example, we might want to bound $c_V(K)$ (or $c_{HZ}(K)$) in the left hand side and note that this number is achieved for convex $K$ at very special time-independent Hamiltonians, which might turn out to be decomposable.

\section{Inequalities between the oscillation and the norm of the differential}
\label{section:oscillation}

In this section we consider another problem that resembles the Bang problem and allows similar approaches. First, we start with an elementary particular case (see also~\cite[P.~113]{clarke1989}):

\begin{theorem}
\label{theorem:diff-on-ball}
Let $F$ be a $C^1$-smooth function on the unit ball $B$ of a norm $\|\cdot\|$. Then
\[
\max_{x\in B} F(x) - \min_{x\in B} F(x) \ge 2 \min_{x\in B} \|dF(x)\|_*,
\]
where $\|\cdot\|_*$ is the corresponding dual norm.
\end{theorem}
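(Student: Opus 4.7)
Let $c=\min_{x\in B}\|dF(x)\|_*$; we may assume $c>0$, else the inequality is trivial. The strategy is a ``steepest ascent/descent'' argument started at the center $0$ of $B$: construct two curves of unit $\|\cdot\|$-speed, issuing from $0$ in opposite directions, along each of which $F$ changes monotonically at rate at least $c$. Since both curves have length $1$ in $\|\cdot\|$ and stay inside $B$, the values of $F$ at their endpoints differ by at least $2c$, which is a lower bound for $\max_B F-\min_B F$.

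\textbf{Step 1 (choice of direction field).} For every $x\in B$ the definition of the dual norm gives a vector $v(x)\in V$ with $\|v(x)\|=1$ and $\langle dF(x),v(x)\rangle=\|dF(x)\|_*\ge c$. Under the standing smoothness assumption of Section~\ref{section:symplectic} on $\|\cdot\|$ and $\|\cdot\|_*$, the maximizer $v(x)$ is unique and depends continuously on $x$ through $dF(x)$.

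\textbf{Step 2 (flows stay in $B$).} Define $\gamma_\pm\colon[0,1]\to V$ by
\begin{equation*}
\dot\gamma_+(t)=v(\gamma_+(t)),\qquad \dot\gamma_-(t)=-v(\gamma_-(t)),\qquad \gamma_\pm(0)=0.
\end{equation*}
Since $\|\dot\gamma_\pm(t)\|=1$, the triangle inequality yields $\|\gamma_\pm(t)\|\le t\le 1$, so both curves remain in $B$. Then
\begin{equation*}
\tfrac{d}{dt}F(\gamma_+(t))=\langle dF(\gamma_+(t)),v(\gamma_+(t))\rangle\ge c,\qquad \tfrac{d}{dt}F(\gamma_-(t))\le -c,
\end{equation*}
and integration from $0$ to $1$ gives $F(\gamma_+(1))\ge F(0)+c$ and $F(\gamma_-(1))\le F(0)-c$, whence $\max_B F-\min_B F\ge F(\gamma_+(1))-F(\gamma_-(1))\ge 2c$.

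\textbf{Main obstacle.} The only subtlety is the regularity needed to run the ODEs for $\gamma_\pm$. When $F$ is merely $C^1$, the vector field $v$ is only continuous, but this is enough to apply Peano's existence theorem; alternatively one approximates $F$ by smooth functions, applies the argument, and passes to the limit since both sides of the inequality are continuous in $F$ in the $C^1$-topology. The reader may notice that the bound is sharp on linear functions $F(x)=\langle p,x\rangle$ with $\|p\|_*=c$, where $\max_B F-\min_B F=2\|p\|_*=2c$.
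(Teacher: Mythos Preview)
Your argument is correct and essentially identical to the paper's: the paper too chooses the unit vector $y(x)$ realizing $\|dF(x)\|_*$, solves $\dot x = y(x)$ with $x(0)=0$, and integrates over $t\in(-1,1)$, which is exactly your $\gamma_+$ and $\gamma_-$ glued together. The only cosmetic difference is that the paper handles regularity by a blanket approximation to smooth data at the outset, whereas you invoke Peano's theorem for the merely continuous field $v$.
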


\begin{proof}
By the standard approximation argument we assume the norms $\|\cdot\|$ and $\|\cdot\|_*$ to be infinitely smooth and strictly convex. We also assume $F$ to be infinitely smooth. For any $x$ consider the unique unit vector $y(x)$ such that 
\[
\langle dF(x), y(x) \rangle = \|dF(x)\|_*.
\]
Under the above assumptions this unit vector depends smoothly on $x$ and we can consider the differential equation:
\[
\dot x = y(x).
\]
We consider its solution with the initial condition $x(0) = 0$. Since this solution has the unit velocity it cannot get outside $B$ in a period of time less than $1$. By the extension of solutions theorem the solution $x(t)$ is defined for $t\in (-1, 1)$. Then we calculate
\[
\frac{d}{dt} F(x(t)) = \langle dF(x), \dot x\rangle = \langle dF(x), y(x)\rangle = \|dF(x)\|_*. 
\]
The value $F(x(t))$ increases with $t$ and if we put $m = \min_{x\in B} \|dF(x)\|_*$ then $\frac{d}{dt} F(x(t)) \ge m$, and therefore $F(x(t))$ oscillates by at least $2m$ on $(-1, 1)$.
\end{proof}

The symplectic approach allows to prove a similar estimate:

\begin{theorem}
\label{theorem:diff-billiards}
Let $F$ be a $C^1$-smooth function on a convex body $K$, and let us measure everything with a norm $\|\cdot\|$ whose unit ball is $B$. Then
\[
\max_{x\in K} F(x) - \min_{x\in K} F(x) \ge \frac{1}{2} e(K\times B^\circ) \cdot \min_{x\in K} \|dF(x)\|_*.
\]
\end{theorem}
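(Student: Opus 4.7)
The plan is to interpret the left-hand side as (an upper bound for) the displacement energy of $K\times B^\circ$ and the right-hand side as (half of) the shortest billiard length, then glue the two together via Theorem~\ref{theorem:aao-billiards}. After the usual smoothing (of $F$, $K$, and the norm), set $m := \min_{x\in K}\|dF(x)\|_*$ and $M := \max_K F - \min_K F$, and extend $F$ smoothly from $K$ to a compactly supported function on $V$ whose oscillation exceeds $M$ by at most any prescribed $\epsilon$ (standard bump-function argument).

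The displacing Hamiltonian is simply $H(x,y,t) := F(x)$ on the time interval $[0,T]$. Its Hamiltonian equations in the symplectic form of Section~\ref{section:symplectic} reduce to $\dot x = 0$ and $\dot y = dF(x)$, so the flow acts as $\phi_t(x,y) = (x, y + t\, dF(x))$ on $K\times V^*$: each fiber $\{x\}\times V^*$ is rigidly translated by $t\, dF(x)$. For $\phi_T(K\times B^\circ)$ to meet $K\times B^\circ$, one would need $x\in K$ and $y_0, y_0+T\, dF(x)\in B^\circ$, forcing $T\, dF(x)\in B^\circ-B^\circ = 2B^\circ$ by central symmetry; this would give $T\|dF(x)\|_*\le 2$, contradicting $T > 2/m$. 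Hence $\phi_T$ displaces $K\times B^\circ$ as soon as $T > 2/m$.

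Since $H$ is time-independent, its total oscillation over $[0,T]$ is at most $T(M+\epsilon)$, and therefore by definition of displacement energy $e(K\times B^\circ)\le T(M+\epsilon)$. Letting $T\downarrow 2/m$ and $\epsilon\downarrow 0$ gives $e(K\times B^\circ)\le 2M/m$. Combining this with $c_{HZ}(K\times B^\circ)\le e(K\times B^\circ)$ and with Theorem~\ref{theorem:aao-billiards}, which identifies $c_{HZ}(K\times B^\circ)$ with the shortest billiard length $\xi$, yields $\xi \le 2M/m$, i.e. the claimed $M\ge (\xi/2)\cdot m$. No serious obstacle is expected: the argument is essentially a symplectic repackaging of the gradient-flow proof of Theorem~\ref{theorem:diff-on-ball}, with the role of the ``time $2$ needed for a unit-speed curve to cross $B$'' replaced by the billiard length $\xi$; the only delicate verifications are the passage to the limit $T\downarrow 2/m$ (needed because strict disjointness requires the strict inequality $T\|dF(x)\|_*>2$) and the harmless approximation of $F$ by a smooth compactly supported function on $V$.
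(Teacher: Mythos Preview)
Your proof is correct and follows essentially the same route as the paper: use $F$ (extended to $V$) as a time-independent Hamiltonian, observe that its flow translates each fiber $\{x\}\times V^*$ by $t\,dF(x)$ and therefore displaces $K\times B^\circ$ once $t>2/m$, bound $e(K\times B^\circ)$ by $2M/m$, and finish with $c_{HZ}\le e$ together with Theorem~\ref{theorem:aao-billiards}. The only difference is that you spell out the flow, the extension of $F$, and the limit $T\downarrow 2/m$ more carefully than the paper does.
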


\begin{proof}
Consider $F$ as a Hamiltonian on $K\times V^*$ (where $V\supset K$ is the ambient space) and observe that its gradient flow has velocity $dF(x)$ in the direction of $V^*$, hence it shifts $K\times B^\circ$ off itself in time $\frac{2}{\min_{x\in K} \|dF(x)\|_*}$. So the total displacement energy of $K$ satisfies
\[
e(K\times B^\circ) \le \frac{2}{\min_{x\in K} \|dF(x)\|_*}\left(\max_{x\in K} F(x) - \min_{x\in K} F(x)\right),
\]
which is equivalent to what we need to prove.
\end{proof}

It turns out that the following version of Theorem~\ref{theorem:diff-billiards} can be proved without any symplectic techniques:

\begin{theorem}
\label{theorem:diff-general}
Let $F$ be a $C^1$-smooth function on a convex body $K\subset \mathbb R^n$ and let $\|\cdot\|$ be the norm with unit ball $K-K$. Then
\[
\max_{x\in K} F(x) - \min_{x\in K} F(x) \ge \min_{x\in K} \|dF(x)\|_*,
\]
where $\|\cdot\|_*$ is the corresponding dual norm.
\end{theorem}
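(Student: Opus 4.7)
The plan is to imitate the flow argument used in the proof of Theorem~\ref{theorem:diff-on-ball}, but starting the trajectory at the minimum point of $F$ and exploiting the fact that the norm's unit ball is $K-K$, so that $\|\cdot\|_*=W_K(\cdot)$ equals the width function of $K$. First I reduce to the smooth case by standard approximation and set $m:=\min_{x\in K}\|dF(x)\|_*$; if $m=0$ the assertion is trivial, so assume $m>0$, which in particular guarantees $dF$ is nowhere zero on $K$.

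For every $x\in K$ let $p(x),q(x)\in K$ be the (generically unique) points realizing $\max$ and $\min$ of $\langle dF(x),\cdot\rangle$ on $K$. Because $\|u\|_*=W_K(u)=\max_K\langle u,\cdot\rangle-\min_K\langle u,\cdot\rangle$, the vector $p(x)-q(x)\in K-K$ lies on the boundary of $K-K$, so $\|p(x)-q(x)\|=1$ and $\langle dF(x),p(x)-q(x)\rangle=\|dF(x)\|_*\ge m$. Define the continuous $\|\cdot\|$-unit vector field $y(x):=p(x)-q(x)$. Along any integral curve of $\dot x=y(x)$ lying in $K$ we have $\tfrac{d}{dt}F(x(t))=\|dF(x(t))\|_*\ge m$.

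I would start the flow at $x_*\in\arg\min_K F$. By first-order optimality, $\langle dF(x_*),v-x_*\rangle\ge 0$ for every $v\in K$, which means $x_*$ is itself the argmin of $\langle dF(x_*),\cdot\rangle$ on $K$, i.e.\ $q(x_*)=x_*$. Consequently $y(x_*)=p(x_*)-x_*$ genuinely points into $K$, and in fact along a chord of $\|\cdot\|$-length exactly $1$, because $p(x_*)-x_*$ realizes the width of $K$ in the direction $dF(x_*)$ and hence lies on $\partial(K-K)$. Granted the key claim that the integral curve of $y$ starting at $x_*$ stays inside $K$ for $\|\cdot\|$-time at least $1$, I would conclude
$$
\max_K F-\min_K F \;\ge\; F(x(1))-F(x_*)\;=\;\int_0^1\|dF(x(t))\|_*\,dt\;\ge\;m,
$$
which is the theorem.

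The hard part is the key claim: the flow might, after leaving $x_*$, bend away from the initial chord $[x_*,p(x_*)]$ (since $y$ depends on $x$) and exit $K$ in $\|\cdot\|$-time strictly less than $1$. My plan to handle this is to discretize the trajectory into a polygonal chain $x_0=x_*,x_1,\ldots,x_N$ with $x_{i+1}=x_i+\varepsilon\,y(x_i)$ and apply a lemma in the spirit of Lemma~\ref{lemma:covering-connected} (the same one invoked in the proof of Theorem~\ref{theorem:billiards-symmetrization}): verify that, thanks to the first-order optimality at $x_*$ together with the pointwise bound $\|dF\|_*\ge m$, the set $\{x_0,\ldots,x_N\}$ cannot be covered by a positive homothet $\lambda K+v$ with $\lambda<1$, so the lemma yields $\sum_i\|x_{i+1}-x_i\|\ge 1$; passing to the continuous limit gives $\|\cdot\|$-length of the curve at least $1$. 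Establishing the non-coverability is where the essential geometric content of the theorem sits, and is the step where I expect the main difficulty.
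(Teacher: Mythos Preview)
Your outline has a genuine gap at exactly the point you flag as ``the hard part'', and your proposed fix does not close it.

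First, the plan to invoke Lemma~\ref{lemma:covering-connected} on the discretized trajectory is circular. The lemma says: if a connected set cannot be covered by a translate of $\lambda K$ with $\lambda<1$, then its $\|\cdot\|$-length is at least $1$. But for your polygonal chain $x_0,\ldots,x_N$ with $x_{i+1}=x_i+\varepsilon\,y(x_i)$ and $\|y(x_i)\|=1$, the total length is identically $N\varepsilon$; the lemma tells you nothing you did not already put in by hand. What you actually need is that the chain \emph{stays inside $K$} while its length reaches $1$, and the lemma is silent about that. You also give no argument for the non-coverability hypothesis itself; ``first-order optimality at $x_*$ together with $\|dF\|_*\ge m$'' is not a proof sketch, it is a list of the available ingredients.

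Second, and more seriously, there is no reason to expect the integral curve through $x_*$ to remain in $K$ for unit $\|\cdot\|$-time. The vector field $y(x)=p(x)-q(x)$ does \emph{not} point into $K$ at every boundary point: at $x\in\partial K$ with outward normal $\nu$ one only knows $\langle p(x),\nu\rangle\le\langle x,\nu\rangle$ and $\langle q(x),\nu\rangle\le\langle x,\nu\rangle$, which places no sign restriction on $\langle p(x)-q(x),\nu\rangle$. In particular, whenever $dF(x)$ is close to the outward normal at $x$, the point $p(x)$ lies near the supporting hyperplane and $y(x)$ can point strictly outward. So once the trajectory leaves the special initial point $x_*$ (where $q(x_*)=x_*$ does force $y$ to point inward), the gradient can rotate and drive the curve out of $K$ well before time $1$. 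Nothing in your setup prevents this.

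The paper's proof avoids this difficulty altogether: instead of committing to the trajectory through $x_*$, it uses a topological fibre argument (a map from $K$ to the $(n-1)$-dimensional leaf space of the flow, together with~\cite[Theorem~6.2]{kar2012}) to produce \emph{some} full trajectory $C\subset K$ that cannot be covered by any smaller homothet of $K$. Lemma~\ref{lemma:covering-connected} then yields $\|\cdot\|$-length of $C$ at least $1$, and integrating $\|dF\|_*\ge m$ along $C$ finishes the proof. The essential idea you are missing is that one must \emph{select} the right trajectory topologically rather than try to push the flow from the minimizer.
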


\begin{proof}
Let us assume everything smooth and even real-analytic, and consider again the trajectories of the normalized gradient (in the sense of the $K-K$ unit ball norm) vector field in a neighborhood of $K$. Observe that assigning a trajectory of this vector field to a point $x\in K$ gives a continuous map $\phi$ from $K$ to a topological space of all trajectories, which has covering dimension at most $n-1$. 

The non-symmetric version \cite[Theorem~6.2]{kar2012} of the theorem on the Alexandrov width from \cite{abr1972,sit1958}, see also~\cite[Proposition~1, pp.~84--85, and Theorem~1, p.~268]{tikh1976}) asserts that whenever a convex body $K$ of dimension $n$ is continuously mapped, $\phi : K\to X$, to a topological space of covering dimension at most $n-1$, there exists a connected subset $C\subseteq K$ that is mapped by $\phi$ to a single point and that cannot be covered by a smaller homothet of $K$. 

Thus obtained set $C$ (mapped to a single point by $\phi$) is in fact a curve segment of the intersection of a trajectory of the vector field with $K$. If the $\|\cdot\|$-length of the trajectory $C$ is at least $1$ then we are done by integrating over this trajectory as in the proof of Theorem \ref{theorem:diff-on-ball}. Otherwise Lemma~\ref{lemma:covering-connected} asserts that $C$ can be covered with a smaller homothet of $K$, which is a contradiction.

The above argument (essentially due to Abramov and Sitnikov) seems to have never been published in English as a whole; below we provided an expanded and relatively self-contained version of it. Assume that for arbitrary $\epsilon > 0$ we have a trajectory $\gamma$ of the gradient flow such that $\gamma \cap K$ cannot be covered by a translate of the homothet $(1-\epsilon)K$. Then by Lemma \ref{lemma:covering-connected} the length of the smallest curve segment $S$ of $\gamma$ containing $\gamma\cap K$ (the latter set may not be connected) is at least $1 - \epsilon$. Integrating over this curve segment $S$ we see the oscillation of $F$ at least $(1-\epsilon) \min_{x\in K} \|dF(x)\|_*$. The theorem holds true if we have such inequality for every $\epsilon > 0$.

So assuming the contrary we take some $\epsilon > 0$ such that for every trajectory of the gradient flow $\gamma$ its part $\gamma\cap K$ can be covered by a translated $(1-\epsilon) K$. Assume also that the origin is in the interior of $K$, this implies $hK\subset \inte h'K$ for $h<h'$.

Consider a trajectory of the gradient flow $\gamma$, we may assume the gradient flow is extended to a neighborhood of $K$ and so is $\gamma$. From the assumption we cover the set $\gamma\cap K$ by a translate of $(1 - \epsilon)K$. Moreover, we can take two parameters $t_0, t_1$ on the curve so that $\gamma(t_0), \gamma(t_1)\not\ni K$, $\gamma$ only gets into $K$ between $\gamma(t_0)$ and $\gamma(t_1)$, and the curve segment $\gamma[t_0, t_1]$ is still covered by a translate of $(1-\epsilon/2)K$, put $h = 1 - \epsilon/2$ for brevity. It is clear that other close to $\gamma$ trajectories $\gamma'$ still have the part $\gamma'\cap K$ covered by the same translate $h K + v$.

Now we observe that the space of all trajectories that we work with has covering dimension at most $n-1$. In the real-analytic case we may parametrize such trajectories by the first point they enter $K$ through $\partial K$, thus making a parametrization by a semianalytic subset of $\partial K$ (we assume $\partial K$ real-analytic) and using the nice structural properties of semianalytic sets, e.g. from \cite{hardt1975}.

Now we are going to construct a map $\psi : K\to\mathbb R^n$ that has mutually exclusive properties: Its image lies in at most $(n-1)$-dimensional subset of $K$, and at the same time its image has nonempty interior. The definition of the covering dimension allows to produce a covering of the space of trajectories by open sets $U_i$ with multiplicity at most $n$ and such that for every $U_i$ there is a translate $h_i K + v_i$ that contains all $\gamma\cap K$ for any $\gamma\in U_i$. Make a partition of unity $\{\rho_i\}$ subordinated to $\{U_i\}$ and put
\[
\psi(x) = \sum_i \rho_i(\gamma_x) v_i,
\]
where $\gamma_x$ is the trajectory through $x$. From the covering property we always have 
\[
\| x - \psi(x)\| \le h < 1.
\]
Let us show that this implies that the convex body $(1-h)K$ is covered by the image of $\psi$. Indeed, for any point $y\in (1-h)K$ the map $\sigma : \partial K \to \partial K$ defined by
\[
\sigma(x) = \frac{\psi(x) - y}{\|\psi(x)-y\|}
\]
through the homotopy
\[
\frac{(1-t)\psi(x) + t x - y}{\|(1-t)\psi(x) + t x - y\|}
\]
becomes $x\mapsto \frac{x - y}{\|x - y\|}$. The homotopy is well defined because the inequality $\| x - \psi(x)\| + \|y\| < 1$ prevents the equality $x = (1-t) (x - \psi(x)) + y$ and keeps nonzero denominator. The additional homotopy
\[
\frac{x - (1-t)y}{\|x - (1-t)y\|}
\]
eventually takes $\sigma$ to the identity map of $\partial K$ of degree $1$. Hence $\sigma$ cannot be extended continuously to a map $K\to \partial K$ showing that for some $x\in K$ we must have $\psi(x) = y$.

Now the image of $\psi$ contains $(1-h)K$ and therefore has nonempty interior, while the formula of its definition and the $n$-fold covering assumption show that the image is lying in the union of countably many convex hulls of $n$-tuples of points in $\mathbb R^n$, $(n-1)$-dimensional simplices. This is a contradiction. 
\end{proof}

\begin{remark}
Theorem \ref{theorem:diff-general} is optimal because any linear function $F$ provides the equality case in Theorem \ref{theorem:diff-general}, since $B=K-K$ and the norm of a linear function $F$, as an element of $(\mathbb R^n)^*$, precisely equals its oscillation on $K$.

If we consider an arbitrary norm with centrally symmetric unit ball $B$ then for the corresponding norms of a vector $v$ we have
\[
\|v \|_B \ge w_B(K)\cdot \|v \|_{K-K}
\] 
and for a linear form $\lambda \in (\mathbb R^n)^*$ and its dual norms we have
\[
w_B(K)\cdot \|\lambda\|_{*, B} \le \|\lambda \|_{*,K-K}.
\]
Since the minimal width $w_B(K)$ of $K$ in the norm $B$ corresponds to a forth and back billiard trajectory, it follows that
\[
c_{HZ}(K\times B^\circ) \le 2 w_B(K)
\] 
and therefore Theorem \ref{theorem:diff-general} implies
\begin{multline*}
\frac{1}{2} c_{HZ}(K\times B^\circ) \cdot \min_{x\in K} \|dF(x)\|_{*, B} \le w_B(K) \cdot \min_{x\in K} \|dF(x)\|_{*, B} \le\\
\le  \min_{x\in K} \|dF(x)\|_{*, K-K} \le \max_{x\in K} F(x) - \min_{x\in K} F(x).
\end{multline*}
This shows that Theorem \ref{theorem:diff-general} is stronger than Theorem~\ref{theorem:diff-billiards} in the case of symmetric $B$ and $c_{HZ} (K\times B^\circ) = e(K \times B^\circ)$.
\end{remark}

\begin{remark}
It is curious that the proof of Theorem~\ref{theorem:diff-on-ball} works in infinite dimensional Banach spaces (for decent functions $f$), while the above argument to prove Theorem~\ref{theorem:diff-general} is essentially finite dimensional. Therefore its extension to infinite dimensional Banach spaces is an open problem.
\end{remark} 




\section{Bang's problem for two directions of planks}
\label{section:two-dir}

In this section we prove a particular case of Bang's problem using elementary methods. It is independent of the symplectic considerations, but we thought it makes sense to confirm another particular case of the conjecture. One may check that it does not follow from the result under the ``almost parallel'' assumption of Theorem \ref{theorem:almost-par}.

\begin{theorem}
Let a convex body $K\subset \mathbb{R}^n$ be covered by a family of planks $P_1,\ldots, P_m$, whose normals have only two distinct directions. Then the sum of widths of the planks in the norm with the unit ball $K-K$ is at least $1$, that is the Bang conjecture holds in this case. 
\end{theorem}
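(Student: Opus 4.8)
The plan is to reduce the statement to a two‑dimensional combinatorial inequality and then to an elementary lemma on planar convex bodies.

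\textbf{Reduction to the plane.} Let $n_1,\dots,n_m\in V^*$ be the plank normals and $L=\bigcap_i\ker n_i$, a subspace of $V$ of codimension $2$ (the $n_i$ span a $2$‑plane in $V^*$). The quotient $V\to W:=V/L$ sends each plank to a plank in $W$ (a plank is a union of cosets of $L$) and the covering of $K$ to a covering of the image $\bar K$; one checks directly that relative widths are unchanged, because both the thickness $b_i-a_i$ of the $i$‑th plank and the support width $\sup_{K-K}\langle n_i,\cdot\rangle$ of $K-K$ in its normal direction depend only on $\bar K$. So we may assume $n=2$. A linear automorphism of $\mathbb R^2$ likewise carries planks to planks and preserves relative widths, so we may further assume the two normal directions are the coordinate axes and $K$ has width $1$ in each of them; translating, $\pi_1(K)=\pi_2(K)=[0,1]$. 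Now the planks are slabs $\{a_i\le x_1\le b_i\}$ and $\{c_j\le x_2\le d_j\}$, their relative widths are $b_i-a_i$ and $d_j-c_j$, and the claim is $\sum_i(b_i-a_i)+\sum_j(d_j-c_j)\ge1$.

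\textbf{Combinatorial reduction.} Put $U=\bigcup_i[a_i,b_i]$ and $V=\bigcup_j[c_j,d_j]$. As $\sum_i(b_i-a_i)\ge|U|$ and $\sum_j(d_j-c_j)\ge|V|$, it suffices to prove $|U|+|V|\ge1$. The covering hypothesis says precisely that $K\cap(U^c\times V^c)=\emptyset$ (complements in $[0,1]$); hence for $x\notin U$ the connected set $\pi_2\big(K\cap(\{x\}\times\mathbb R)\big)=:[\ell(x),r(x)]$ lies in $V$. Therefore $|V|\ge\big|\bigcup_{x\in U^c}[\ell(x),r(x)]\big|$, and $|U|+|V|\ge1$ follows from the lemma below (applied to $S=U^c$, or to a finite union of closed subintervals of $U^c$ of almost full measure):

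\emph{Key Lemma. For a convex body $K\subset\mathbb R^2$ with $\pi_1(K)=\pi_2(K)=[0,1]$ and any finite union of closed intervals $S\subseteq[0,1]$, $\big|\bigcup_{x\in S}[\ell(x),r(x)]\big|\ge|S|$.}

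\textbf{Proof of the Key Lemma.} Write $F(S)=\bigcup_{x\in S}[\ell(x),r(x)]$, a finite union of closed intervals. Since $[\ell(x),r(x)]\subseteq F(S)$ for $x\in S$, we have $S\subseteq\{x:[\ell(x),r(x)]\subseteq F(S)\}$, so it is enough to show $\big|\{x:[\ell(x),r(x)]\subseteq B\}\big|\le|B|$ for every finite union of closed intervals $B$; and since $[\ell(x),r(x)]$ is connected it can lie only in a single component of $B$, which reduces this to a single interval $B=[c,d]$:
\[
\big|\{x\in[0,1]:\ell(x)\ge c,\ r(x)\le d\}\big|\le d-c.\tag{$\dagger$}
\]
For $(\dagger)$ I would first shrink $K$ to a quadrilateral: choose $(0,y_0),(1,y_1)\in K$ on the extreme vertical lines and $(p_0,0),(p_1,1)\in K$ where $K$ attains its extreme $x_2$‑values, and set $Q=\conv\{(0,y_0),(1,y_1),(p_0,0),(p_1,1)\}\subseteq K$. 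Then $\pi_1(Q)=\pi_2(Q)=[0,1]$ and, as $\ell_Q\ge\ell_K$ and $r_Q\le r_K$, the set in $(\dagger)$ grows when $K$ is replaced by $Q$; so it suffices to prove $(\dagger)$ for $Q$, whose boundary functions $\ell_Q,r_Q$ are piecewise linear with one breakpoint each. Assuming $p_0\le p_1$, the chord of $Q$ from $(p_0,0)$ to $(p_1,1)$ meets $\{x_1=x\}$, for $x\in[p_0,p_1]$, at height $\frac{x-p_0}{p_1-p_0}\in[\ell_Q(x),r_Q(x)]$, so the set in $(\dagger)$ meets $[p_0,p_1]$ in a subinterval of length at most $(d-c)(p_1-p_0)$; on the complementary intervals $[0,p_0]$ and $[p_1,1]$ the functions $\ell_Q,r_Q$ are single linear pieces (edges of $Q$), and a short direct computation bounds the corresponding part of the set by $(d-c)\big(1-(p_1-p_0)\big)$. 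Adding gives $(\dagger)$.

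\textbf{Main obstacle.} The reductions and the chord estimate are routine; the one genuinely fiddly point is the final computation for the quadrilateral, which requires a small case‑analysis according to which edges of $Q$ bound the slices over $[0,p_0]$ and $[p_1,1]$. (For the extreme cases there is a cleaner route avoiding the quadrilateral: if $c=0$, $(\dagger)$ reads $|\{r\le d\}|\le d$, and writing $\{r>d\}=(\tau_1,\tau_2)$ with $r$ attaining $\max r=1$ at $\nu\in(\tau_1,\tau_2)$, concavity of $r$ — it lies above the chords from $(\nu,1)$ to $(0,r(0))$ and to $(1,r(1))$ — gives $\tau_1\le\nu d$ and $1-\tau_2\le(1-\nu)d$, hence $|\{r\le d\}|=\tau_1+1-\tau_2\le d$; the case $d=1$ is symmetric.)
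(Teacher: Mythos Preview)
Your reductions are correct and genuinely different from the paper's argument. The paper normalizes to $K$ inscribed in the unit square, takes a covering with the minimal number of planks, and runs a six-case analysis (on whether the leftmost and rightmost uncovered pieces $M_1,M_{k+1}$ are empty and how their $y$-projections interact) that either merges two planks or derives a contradiction, with the Ohmann two-plank result as the base case. Your route --- pass to $|U|+|V|\ge 1$, then to the Key Lemma, then by the connectedness-of-slices dualization to the single-strip inequality $(\dagger)$ --- is cleaner conceptually and isolates $(\dagger)$ as the geometric core without any induction. Note, incidentally, that $(\dagger)$ is strictly stronger than the two-plank result: the latter only says the \emph{diameter} of $\pi_1(K\setminus H)$ is at least $1-(d-c)$, whereas $(\dagger)$ asserts the same for its \emph{measure}.

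The genuine gap is in your proof of $(\dagger)$. The additive decomposition you propose --- at most $(d-c)(p_1-p_0)$ from the diagonal chord on $[p_0,p_1]$ and at most $(d-c)\big(1-(p_1-p_0)\big)$ from the two end-intervals --- does not hold: the second bound is false. Take the quadrilateral with $p_0=0.4$, $p_1=0.6$, $y_0=y_1=0.01$ and strip $[c,d]=[0.005,\,0.5]$. Then on $[0,0.4]$ the binding constraint is $\ell_Q\ge c$, giving $A_Q\cap[0,0.4]=[0,0.2]$; on $[0.6,1]$ the binding constraint is $r_Q\le d$, giving $A_Q\cap[0.6,1]=[0.6+0.2/0.99,\,1]$. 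The end contribution is $0.2+0.2/0.99\approx 0.398$, while your claimed bound is $(d-c)\cdot 0.8=0.396$. What is happening is that the diagonal-chord bound on $[p_0,p_1]$ wastes all its slack (here $A_Q\cap[p_0,p_1]=\emptyset$), and the two halves of your split do not interact the way you need. Your ``cleaner route'' is fine but only covers $c\le 0$ or $d\ge 1$; the case $0<c<d<1$ with $c\le y_0,y_1\le d$ is exactly where the difficulty sits, and there the end-interval bound fails. The inequality $(\dagger)$ itself seems to be true --- in the example $|A_Q|\approx 0.398<0.495=d-c$ --- so your scheme is salvageable, but you need a different argument for it (for instance, analyze which of the two constraints is binding at $\gamma_1$ and at $\gamma_2$ separately and use the side chords $(0,y_0)\!-\!(p_0,0)$, $(0,y_0)\!-\!(p_1,1)$, $(1,y_1)\!-\!(p_0,0)$, $(1,y_1)\!-\!(p_1,1)$ together with the diagonal, rather than the fixed additive split).
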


\begin{proof} 
If all the planks are parallel to each other then the assertion is evidently true. Assume there are two distinct normals $n_1,n_2\in V^*$ (we put $V=\mathbb R^n$ and normalize $n_1$ and $n_2$ by the norm with unit ball $(K-K)^\circ$). Obviously, the projection 
\[
\pi : V \to \mathbb R^2,\quad \pi(x) = (n_1(x), n_2(x))
\]
reduces the problem to the following planar case: The projection (denote it by $K$ again) is inscribed in the unit square $abcd$ (let $a$ be the left bottom and $b$ be left top), that is $K$ contains points on every side of $abcd$. The unit square appears because the normalization of $n_1$ and $n_2$ simply means that their ranges on $K$ both have unit lengths.

Let the points where the projection of $K$ touches the sides $ab$ and $cd$ be $p$, $q$ respectively, see Figure~\ref{fig:abcd}. Assume $K$ to be covered by a set of horizontal and vertical planks with sum of widths (now the vertical and horizontal widths are in fact Euclidean) less than $1$. Also choose such a covering with the minimal number of planks.

If there are only two planks then the result is well known, see~\cite{ohmann1957} or \cite[Lemma~10.1.1]{bezdek2010book}. So we assume that there are $k$ vertical planks and at least $k$ horizontal planks (we interchange the axes if needed), $k>1$.

Consider the points of $K$ not covered with the vertical planks, they split into $k+1$ convex sets $M_1\cup M_2\cup\dots\cup M_{k+1}$ ordered from left to right, some of the $M_i$ may be empty. These sets have to be covered with horizontal planks and this reduces to cover their projection to the $0y$ axis with a set of segments. Definitely, one needs at most $k+1$ segments to cover those projections, and we know that $k$ segments are really needed. Now consider the cases (Figure~\ref{fig:cases} may be of help):

\begin{center}
	\parbox[b]{0.4\textwidth}{
		\begin{center}
			\includegraphics{fig-symplectic-bang-1.mps}\\
		\f \label{fig:abcd} 
		\end{center}
	}
	\hskip 0.3cm
	\parbox[b]{0.4\textwidth}{
		\begin{center}
			\includegraphics{fig-symplectic-bang-2.mps}\\
		\f \label{fig:cases}	
		\end{center}
	}
		
\end{center}

\begin{enumerate}
\item 
The set $M_1\ni p$ is nonempty and its projection to $0y$ has no intersection with the projections of other $M_i$'s. Then one horizontal plank is needed to cover $M_1$ separately from the other parts. But it makes sense to replace this plank with a vertical one, indeed, the set $M_1$ contains the triangle $pc_1d_1$ (see Figure~\ref{fig:abcd}) homothetic to $pcd$, whose vertical and horizontal widths coincide. Therefore the vertical width of $M_1$ is at least its horizontal width. So we replace the horizontal plank of $M_1$ with a vertical one and merge this vertical plank with the first vertical plank in the list. After that the sum of widths does not increase and the number of planks does decrease. 

\item
The case when the projection of the last $M_{k+1}\ni q$ to $0y$ does not intersect the other projections of $M_i$'s is considered similarly.

\item
The set $M_1$ is empty and $M_{k+1}$ is also empty. Then the projections of $M_i$'s to $0y$ can be covered with $k-1$ segments, but we have assumed that the number of segments is at least $k$.

\item 
$M_1=\emptyset$, $M_{k+1}$ is not empty and its projection to $0y$ intersects some of the projections of other $M_i$'s. Again, in this case at most $k-1$ horizontal planks are sufficient.

\item
Similar to the previous case, when we interchange $M_1$ and $M_{k+1}$. 

\item 
Both the projections of $M_1\ni p$ and $M_{k+1}\ni q$ to $0y$ are nonempty and both of them intersect other $M_i$'s. Again, we know that we really need at least $k$ horizontal planks to cover $M_i$'s. This may only happen when the projections of $M_1$ and $M_{k+1}$ do intersect and the projections of $M_i$'s with $2\le i\le k$ are disjoint from them and are disjoint from each other. 

Therefore there is a horizontal plank $P_h$ that covers both $M_1$ and $M_{k+1}$. 
Other sets $M_2,\dots,M_k$ then have to be disjoint from $P_h$ since the total number of needed horizontal planks is precisely $k$.
It is left to note, that the segment $[p, q]$ is covered by $P_h$, but $[p,q]$ should intersect all $M_i$, Figure \ref{fig:cases}, that leads to contradiction.
%

\end{enumerate}

\end{proof}

\section{Fractional Bang-type results}
\label{section:fractional}

\subsection{Linear programming considerations and covering by Euclidean cylinders}

A \emph{fractional Bang theorem} would be a result showing that if a convex body $K$ is covered by a family of planks so that every point of $K$ is covered at least $k$ times, then the sum of widths is at least $Wk$ for some constant $W$. Below we calculate $W$ for certain cases and show that it generally must be less than the constant from the original Bang-type results.

Another equivalent statement (explaining the term ``fractional'') would be to consider the planks with non-negative weights covering any point in $K$ with the sum of weights at this point at least $1$, and deduce that the weighed sum of widths of the planks is at least $W$. Again, we in principle measure the width of a plank in arbitrary norm. 

Minimization of the weighted sum of the planks is a linear programming problem with an infinite number of variables (weights of the planks) and constraints (the points with their requirements to be covered at least $1$ time). A lower bound $W$ in this problem is evidently given by any probability Borel measure $\mu$ in $K$ satisfying $w(P) \ge W \mu (P\cap K)$. A version of the Farkas lemma then concludes that the maximum of such lower bounds (the maximum is attained because of the compactness of the space of measures) is in fact equal to the minimum in the original problem, is we extend the original problem from finite collections of planks to integrals over a measure on the set of planks.

This technique is hard to apply for arbitrary bodies and norms, so we concentrate on the case of the Euclidean norm in $\mathbb R^n$ and its unit ball $B^n$. 

\begin{theorem}
If the unit Euclidean ball $B\subset\mathbb R^n$, for $n\ge 3$, is covered by a set of weighted planks $P_1,\ldots, P_N$ with respective weights $t_1,\ldots, t_N$ (so that every its point is covered with sum of weights at least $1$) then the weighted sum
\[
\sum_i t_i w(P_i) \ge W_n = \frac{\Gamma\left(\frac{n-1}{2}\right)\Gamma\left(\frac{1}{2}\right)}{\Gamma\left(\frac{n}{2}\right)}.
\]
\end{theorem}

\begin{proof}
For $n\ge 3$ we take $\mu$ to be a properly normalized surface area on $\partial B^n$. Its projection to a one-dimensional line will be a measure on the segment $[-1,1]$ with density proportional to $(1-x^2)^{\frac{n-3}{2}}$, for $n\ge 3$ this density has maximum $1$ at zero. When we normalize this measure to make it probability measure, there appear the factor $1/W_n$ at the projected density, where
\[
W_n = \int_{-1}^1 (1-x^2)^{\frac{n-3}{2}}\; dx = \frac{\Gamma\left(\frac{n-1}{2}\right)\Gamma\left(\frac{1}{2}\right)}{\Gamma\left(\frac{n}{2}\right)}.
\]
After such a normalization for any plank perpendicular to our projection we have
\[
w(P_i) \ge W_n \mu (P_i\cap B^n),
\]
and from the radial symmetry this applies to any plank in fact. Summation with weights then shows
\[
\sum_i t_i w(P_i) \ge W_n \sum_i t_i \mu (P_i\cap B^n) \ge W_n \mu (B^n) = W_n.
\]
\end{proof}

Then constant $W_n$ is of order $\frac{1}{\sqrt{\pi n}}$ for large $n$ (and equals $2$ for $n=3$ as expected). Actually, this constant cannot be improved, as the following argument shows. Take a sufficiently small $\delta>0$ and consider a set of $N$ random centrally symmetric planks of width $\delta$. Each such plank covers 
\[
\frac{\int_{-\delta/2}^{\delta/2} (1-x^2)^{\frac{n-3}{2}}\; dx}{\int_{-1}^1 (1-x^2)^{\frac{n-3}{2}}\; dx}
\]
of the surface area, which is close to $\delta / W_n$ for small $\delta$. Then the sum of widths is $\delta N$ and the expected covering multiplicity is approximately $\delta N / W_n$. Then some kind of central limit theorem shows that the minimal covering multiplicity can get sufficiently close to $\delta N/ W_n$ thus showing that $W_n$ is tight.

A similar argument with the uniform measure on $\partial B^n$ is applicable when we want to fractionally cover the Euclidean unit ball $B^n$ with $m$-dimensional Euclidean cylinders, that is sets congruent to $Z = X\times \mathbb R^m$, where $X$ is an $(n-m)$-dimensional convex body. We denote by $\sigma_{n-m}(Z) = \vol_{n-m} X$ the $(n-m)$-dimensional cross-section of $Z$. 

\begin{theorem}
Let $m\ge 2$. For a weighted covering of $B^n$ with $m$-dimensional Euclidean cylinders $Z_1, \ldots, Z_N$ with respective weights $t_1,\ldots, t_N$, we have:
\[
\sum_{i=1}^N t_i \sigma_{n-m}(Z_i) \ge \frac{n\pi^{n/2}}{\Gamma(n/2+1)} \frac{\Gamma(m/2)}{2\pi^{m/2}} = \frac{\pi^{\frac{n-m}{2}}\Gamma(m/2)}{\Gamma(n/2)}.
\]
\end{theorem}

\begin{proof}
We again use the uniform measure on $\partial B^n$. As in the previous proof, the right hand side must be the ratio of the total measure of $\partial B^n$ and the maximum density of the projection of this measure to $\mathbb R^{n-m}$. The latter density is given by (as the reader can check by elementary integration):
\[
\rho_m (x) = \frac{2\pi^{m/2}}{\Gamma(m/2)} (1-|x|^2)^{m/2-1}.
\]
\end{proof}

For $m=2$ the above theorem gives the precise estimate for a \emph{non-fractional} covering by $2$-dimensional Euclidean cylinders, extending the original proof of Moese:

\begin{corollary}
\label{corollary:2-cylinders}
If the Euclidean unit ball is covered by $2$-dimensional Euclidean cylinders $Z_1,\ldots, Z_N$ then
\[
\sum_{i=1}^N \sigma_{n-2}(Z_i) \ge \vol_{n-2}(B^{n-2}) = \frac{\pi^{n/2-1}}{\Gamma(n/2)}.
\]
\end{corollary}

A similar result for $m=1$ was proved by K\'aroly Bezdek and Alexander Litvak in~\cite[Theorem~3.1]{bezdek2009covering}.

In~\cite{kadets2004weak} Vladimir~Kadets showed the following. For any $\varepsilon>0$, there is a covering of the Hilbert space by Hilbert cylinders $Z_i$ isometric to $B_i\times H$, where $B_i$ is a $3$-dimensional ball, such that 
\[
\sum_i \sigma(Z_i) = \sum_i \vol_3 (B_i) < \varepsilon.
\]
Here $\sigma$ denotes the $3$-dimensional cross-section. In~\cite{bezdek2009tarski} K\'aroly~Bezdek seemingly thought that from Kadets' construction it follows that, for any $\varepsilon>0$, there exists sufficiently large~$n$ such that the unit ball $B^n$ can be covered by $(n-3)$-cylinders $Z_i$ with sum of their cross-section less than $\vol(B^3)$. Actually it does not follow from Kadets' construction, which was essentially based on infinite-dimensional properties of the Hilbert space. So~\cite[Problem~3.5]{bezdek2009tarski} can be restated in the following natural form:

\begin{conjecture}
If the Euclidean unit ball $B^n$ is covered by $m$-dimensional Euclidean cylinders $Z_1,\ldots, Z_N$ then
\[
\sum_{i=1}^N \sigma_{n-m}(Z_i) \ge \vol_{n-m}(B^{n-m}) = \frac{\pi^{(n-m)/2}}{\Gamma((n-m)/2+1)}.
\]
\end{conjecture}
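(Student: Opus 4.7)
The natural first move is to see whether the surface-area argument that settled Theorem~\ref{theorem:2-cylinders} can be pushed to $m \ge 3$. As the authors observe, pairing the characteristic functions $\mathbf{1}_{Z_i}$ with the standard surface measure on $\partial B^n$ and pushing forward to $B^{n-m}$ produces the density $\rho_m(x) = \frac{2\pi^{m/2}}{\Gamma(m/2)}(1-|x|^2)^{m/2-1}$, which is constant only for $m=2$. For $m\ge 3$ this density blows up at $\partial B^{n-m}$, so dividing by its maximum wastes a nontrivial factor and yields only the fractional bound. Consequently, no single-measure (that is, purely linear-programming) argument can succeed, and a proof must exploit the fact that the cover uses entire indicator functions rather than fractional weights.

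My first attempt would be to bring in the symplectic machinery of Section~\ref{section:cylinders}. In the spirit of Conjecture~\ref{conjecture:cylinders}, one can try to view an $m$-dimensional Euclidean cylinder $Z_i = X_i\times\mathbb{R}^m\subset\mathbb{R}^n$ as (a subset of) a symplectic cylinder in an enlarged ambient space: when $m=2k$ is even, identify $\mathbb{R}^m$ with $\mathbb{C}^k$ and complement with $(n-m)$ conjugate coordinates to symplectically embed $B^n\hookrightarrow\mathbb{C}^n$; under this embedding each $Z_i$ sits inside a symplectic cylinder whose cross-section matches $\sigma_{n-m}(Z_i)$ up to a computable constant. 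Applying the conjectural covering inequality $c_{HZ}(B^n)\le\sum_i c_{HZ}(\widetilde Z_i)$ with $c_{HZ}(B^n)=\pi$ would then reproduce the target. For odd $m$ one first doubles up to $\mathbb{R}^n\times\mathbb{R}$ and proceeds analogously.

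A more elementary alternative is an induction on $m$ whose base cases $m=1$ (Bezdek) and $m=2$ (Theorem~\ref{theorem:2-cylinders}) are in hand. Given a cover of $B^n$ by $m$-cylinders, I would slice by hyperplanes orthogonal to a unit vector $u$; for almost every $u$ each $Z_i$ restricts to an $m$-cylinder in the hyperplane with the same $(n-m)$-dimensional cross-section, producing a one-parameter family of covers of smaller-radius balls. Integrating the inductive inequality against a carefully chosen weight in $u$ should recover a global inequality on $B^n$. Getting the sharp constant is delicate, since integrating naively against the uniform measure on $S^{n-1}$ merely reproduces the fractional bound; some coupling between the slicing direction and the configuration of the $Z_i$ is needed to see the integrality of the covering.

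The main obstacle, regardless of the route, is exactly this gap between the fractional LP optimum and the integer bound for $m\ge 3$. I expect overcoming it to require either a genuine subadditivity statement for the Hofer--Zehnder capacity (which would also resolve Conjecture~\ref{conjecture:cylinders}), or an analogue of Bang's quadratic-function trick that operates on $(n-m)$-dimensional cross-sections rather than on $1$-dimensional widths; neither seems within easy reach with current techniques.
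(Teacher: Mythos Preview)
The statement you are addressing is labelled a \emph{Conjecture} in the paper, not a theorem; the paper offers no proof of it and presents it as an open restatement of \cite[Problem~3.5]{bezdek2009tarski}. So there is no ``paper's own proof'' to compare against, and your proposal is likewise not a proof but a survey of possible approaches together with the reasons they stall. That diagnosis is broadly in line with the paper's own discussion.

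Two specific points are worth correcting. First, your claim that for $m\ge 3$ the projected density $\rho_m(x)=\tfrac{2\pi^{m/2}}{\Gamma(m/2)}(1-|x|^2)^{m/2-1}$ ``blows up at $\partial B^{n-m}$'' is backwards: the exponent $m/2-1$ is positive for $m\ge 3$, so $\rho_m$ vanishes on the boundary and is maximized at the origin (the blow-up occurs for $m=1$). The conclusion that the single-measure LP argument only yields the fractional bound is still correct, but the mechanism is the non-constancy of $\rho_m$, not a boundary singularity.

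Second, your symplectic sketch is not a viable route as written. Embedding $B^n$ into $\mathbb C^n$ by ``complementing with conjugate coordinates'' places it as a Lagrangian-type set, not as something with $c_{HZ}=\pi$; and the symplectic cylinders of Section~\ref{section:cylinders} have \emph{two}-dimensional cross-section by definition, so an $m$-cylinder with $(n-m)$-dimensional base does not fit into one with matching cross-section unless $n-m=2$. Even granting a workable embedding, the argument would rest on Conjecture~\ref{conjecture:cylinders}, itself open. Your final assessment---that a genuine subadditivity statement or a higher-codimension analogue of Bang's trick would be needed---is reasonable and matches why the paper leaves the statement as a conjecture.
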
 

\subsection{Fractional covering by almost parallel planks}

Now, in addition to the above elementary considerations, we prove a result about covering of a Euclidean ball using symplectic methods. Informally, it shows that the constant $W$ from the above discussion gets closer to $2$ when the planks are ``almost parallel'':

\begin{theorem}
\label{theorem:frac-bang}
Let $\{P_i\}$ be a family of planks, covering every point of the Euclidean ball $B\subset\mathbb R^d$ at least $k$ times. Assume also that the normals to the planks $n_i$ may be oriented so that for every $i\neq j$, $n_i\cdot n_j \ge C$, where $C\ge 0$ is a constant. Then
\[
\sum_i w(P_i) \ge 2\sqrt{((k-1)C+1)k}.
\]
\end{theorem}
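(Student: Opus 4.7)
The plan is to adapt the Hamiltonian construction of Theorem~\ref{theorem:almost-par} by introducing a time-scaling parameter and then reading off the bound from the Hofer--Zehnder capacity of the Euclidean disc bundle $B\times B$.

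First, for each plank $P_i$ of Euclidean width $w(P_i)$ and unit normal $n_i$, I take the piecewise-affine Hamiltonian $H_i$ vanishing on one side of $P_i$, equal to $w(P_i)$ on the other, and linear in between, letting it act on a time interval of length $T$ (rather than of length $2$ as in Theorem~\ref{theorem:almost-par}). Its flow fixes points with $x\notin P_i$ and sends $(x,y)$ with $x\in P_i$ to $(x,\, y - T n_i)$, while contributing $T\, w(P_i)$ to the total oscillation. Concatenating over all planks, the oscillation equals $T\sum_i w(P_i)$, and a point $(x,y)\in B\times B$ is eventually sent to $(x,\, y - T\sum_{i\in S(x)} n_i)$, where $S(x) = \{i : x\in P_i\}$ has cardinality at least $k$ by the $k$-fold covering hypothesis.

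The key new input is to bound the accumulated shift from below using the inner-product hypothesis:
\[
\left|\sum_{i\in S(x)} n_i\right|^2 = |S(x)| + 2\sum_{\{i,j\}\subset S(x)} (n_i,n_j) \ge |S(x)|\bigl(1+(|S(x)|-1)c\bigr) \ge k\bigl(c(k-1)+1\bigr),
\]
where the last inequality uses monotonicity in $|S(x)|$, valid since $c\ge 0$. Choosing $T$ slightly larger than $2/\sqrt{k(c(k-1)+1)}$ then makes the final shift strictly longer than $2$, so every $y$ with $|y|\le 1$ ends up outside $B$ and the construction displaces $B\times B$ off itself. As in Theorem~\ref{theorem:almost-par}, the nonnegativity $(n_i,n_j)\ge 0$ plays the role of the ``almost parallel'' assumption that prevents a previously displaced point from being shifted back into $B\times B$ by later flows.

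Finally, by Theorem~\ref{theorem:aao-billiards} and Theorem~\ref{theorem:billiards-in-ball} (the shortest closed Euclidean billiard trajectory in $B$ being a diameter traversed twice, of length $4$), one has $c_{HZ}(B\times B) = 4$; combining this with $e \ge c_{HZ}$ and the displacement upper bound above yields
\[
T\sum_i w(P_i) \;\ge\; e(B\times B) \;\ge\; c_{HZ}(B\times B) = 4,
\]
and letting $T \searrow 2/\sqrt{k(c(k-1)+1)}$ gives $\sum_i w(P_i) \ge 2\sqrt{k(c(k-1)+1)}$. The main obstacle is the same purely technical one as in Theorem~\ref{theorem:almost-par}: the Hamiltonians $H_i$ are only Lipschitz in $x$ and must be smoothed in small neighborhoods of $\partial P_i$, and one has to check that the resulting ``boundary strips'' of imperfectly shifted points contribute a vanishing error to the displacement estimate once the smoothing limit is taken.
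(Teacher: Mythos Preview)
Your argument is correct and shares the same core computation with the paper: the lower bound $\bigl|\sum_{i\in S(x)} n_i\bigr|^2 \ge k\bigl(c(k-1)+1\bigr)$ from the pairwise inner-product hypothesis, combined with the fact that $c_{HZ}(B\times B)=4$.

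The paper packages this slightly differently, and the difference is worth noting. Instead of concatenating the flows of $H_1,\dots,H_N$ in time, the paper simply adds the functions, setting $F=\sum_i F_i$ (your $H_i$) as a single time-independent Hamiltonian. At every $x\in B$ one has $dF(x)=\sum_{i\in S(x)} n_i$, so $|dF(x)|\ge\sqrt{k(c(k-1)+1)}$ directly. One can then either run your displacement-energy argument with this single $F$, or invoke the elementary Theorem~\ref{theorem:diff-on-ball} (no symplectic input at all) to conclude $\max_B F-\min_B F\ge 2\sqrt{k(c(k-1)+1)}$; since $\max_B F-\min_B F\le\sum_i w(P_i)$, the theorem follows. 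This buys two things. First, the ``return'' issue you raise disappears entirely: with a single autonomous $F$ the $y$-coordinate moves in a straight line, so there is nothing to come back. (In fact, even in your concatenated version the return worry is a red herring, since displacement energy only sees the time-one map; the nonnegativity $(n_i,n_j)\ge 0$ is really only used to ensure that the smoothed partial shifts near $\partial P_i$ do not decrease the norm of the total shift.) Second, via Theorem~\ref{theorem:diff-on-ball} the proof becomes completely elementary---just an ODE trajectory through the origin---with the symplectic route kept only as an alternative.
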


\begin{proof}
For every $P_i$ we consider the function $F_i$ with $dF_i = n_i$ on $P_i$ and $dF_i=0$ outside $P_i$. These functions are not smooth but the following argument remains valid after a suitable smoothening of every $F_i$. 

Then we consider the sum $F(x) = \sum_i F_i(x)$. The assumption $n_i\cdot n_j\ge C$ implies that $|dF(x)| \ge \sqrt{((k-1)C+1)k}$. Then using the displacement energy of $B\times B$ as in the proof of Theorem~\ref{theorem:almost-par}, or using Theorem~\ref{theorem:diff-on-ball}, we obtain:
\[
\max_B F(x) - \min_B F(x) \ge 2\sqrt{((k-1)C+1)k}.
\]
But the difference of every summand $F_i$ is at most $w(P_i)$, and the result follows.
\end{proof}

\bibliography{../Bib/karasev}
\bibliographystyle{abbrv}
\end{document}